\title{Uniformly defining 
$p$-henselian valuations}
\author{Franziska Jahnke} 
\address{Institut f\"ur Mathematische Logik\\Einsteinstr. 62\\48149 M\"unster, 
Germany}
\email{franziska.jahnke@uni-muenster.de}
\author{Jochen Koenigsmann}
\address{Mathematical Institute\\Radcliffe Observatory Quarter\\
Woodstock Road\\Oxford OX2 6GG, UK}
\email{koenigsmann@maths.ox.ac.uk}
\thanks{
Some of the research leading to these results has received funding from the 
[European Community's] Seventh Framework Programme 
[FP7/2007-2013] under grant agreement 
number 238381.}
\keywords{Valuations, $p$-henselian valued fields, 
definable valuations}
\subjclass{Primary: 03C40, 12E30. Secondary: 12L12, 13J13, 16W60.}
\newtheorem{Th}{Theorem}[section]
\newtheorem{Thm}[Th]{Theorem}
\newtheorem*{MainTheorem}{Main Theorem}
\newtheorem{MaiTheorem}[Th]{Main Theorem}
\newtheorem*{Def}{Definition}
\newtheorem{Cor}[Th]{Corollary}
\newtheorem{Prop}[Th]{Proposition}
\newtheorem*{Ex}{Example}
\newtheorem*{Rem}{Remark}
\newtheorem{Obs}[Th]{Observation}
\newtheorem{Lem}[Th]{Lemma}
\begin{document}
\begin{abstract}
Admitting a non-trivial $p$-henselian
valuation is a weaker assumption on a field than admitting a non-trivial henselian
valuation. Unlike henselianity, $p$-henselianity is an elementary property
in the language of rings. We are interested in the question when a field
admits a non-trivial 0-definable $p$-henselian valuation (in the language
of rings). We give a classification of elementary
classes of fields in which the canonical $p$-henselian valuation is
uniformly 0-definable.  We then apply this to show that there is a definable
valuation inducing the ($t$-)henselian topology on any ($t$-)henselian 
field which is neither separably closed nor real closed.
\end{abstract}

\maketitle
\section{Introduction}
Where a valuation $v$ on a field $K$
contributes to the arithmetic of $K$,
e.g., in the sense that the existence of $K$-rational points
on certain algebraic varieties defined over $K$
is guaranteed or prohibited by `local' conditions `at $v$',
the valuation $v$ (or rather its valuation ring ${\mathcal O}_v$)
is often definable by a first-order formula $\phi (x)$
in the language of rings ${\mathcal L}_\textrm{ring} = \{+,\times; 0,1\}$: 
For each $a\in K$, one has $a\in {\mathcal O}_v$
if and only if $\phi (a)$ holds in $K$
-- we then write ${\mathcal O}_v = \phi (K)$.

This happens, for example, for all valuations in all global fields
(a fact implicit in the pioneering works \cite{Ro49} and \cite{Ro59} 
of Julia Robinson),
and later, Rumely even found a {\em uniform} 
first-order definition for all valuation rings in all global fields 
(\cite{Ru80}).
It also happens in the classical henselian fields
$\mathbb{Q}_p$ and $\mathbb{F}_p((t))$
or $k((t))$ for an arbitrary field of coefficients $k$
via the well known formulas for $\mathbb{Z}_p$ in $\mathbb{Q}_p$
and for $k[[t]]$ in $k((t))$ due to Ax and others.
It does not happen on $\mathbb{C}$ or on $\mathbb{R}$
or on any algebraically or real closed field,
where no valuation is of arithmetical interest,
and where no non-trivial valuation is first-order definable,
because, by quantifier elimination,
first-order definable subsets of algebraically closed fields are finite or cofinite
and those on real closed fields are finite unions of intervals and points.

In the 1970's the concept of a {\em 2-henselian} valuation
emerged from the algebraic theory of quadratic forms,
and later, by way of analogy,
the notion of a $p$-henselian valuation was coined
for an arbitrary prime number $p$:
A valuation $v$ on a field $K$ is called {\em $p$-henselian}
if $v$ has a unique prolongation to $K(p)$,
the maximal Galois-$p$ extension of $K$
(i.e., the compositum of all finite Galois extensions of $p$-power degree
over $K$ in some fixed algebraic closure of $K$).
Equivalently, $v$ is $p$-henselian on $K$
if it has a unique prolongation to each Galois extension of degree $p$
-- this fact that $p$-henselianity shows in Galois extensions of bounded degree
makes it easier to find definable $p$-henselian valuations
compared to finding definable henselian valuations.
Note that every henselian valuation is $p$-henselian
but, in general, not the other way round.

Like for henselian valuations
there may be several $p$-henselian valuations on a field $K$,
but there always is a canonical one:
the {\em canonical $p$-henselian valuation $v_K^p$} on a field $K$
is the coarsest $p$-henselian valuation $v$ on $K$
whose residue field $Kv$ is $p$-closed
(i.e., where $Kv = Kv(p)$)
if there is any such;
if not it is the finest $p$-henselian valuation on $K$
(cf.\,section 3 of \cite{Koe95} 
where existence and uniqueness of $v_K^p$ is proven).
Recall that, for two valuations $v,w$ on $K$,
$v$ is finer than $w$ 
just in case ${\mathcal O}_v\subseteq {\mathcal O}_w$.
Recall further that if $v$ is finer than $w$, then,
equivalently, $w$ is coarser than $v$.
The valuation $v_K^p$ is non-trivial
if and only if $K$ admits a non-trivial $p$-henselian valuation.

This paper is intended to 
both close a gap in the proof of Theorem 3.2 of \cite{Koe95}
and to present a more uniform version of the Theorem.
This Theorem asserts that $v_K^p$ is first-order definable
if $K$ is of characteristic $p$
or if $K$ contains a primitive $p$-th root $\zeta_p$ of unity
and, if $p=2$, the residue field $Kv_K^p$ is not Euclidean.
The gap occurred in the case where
$(K,v_K^p)$ is of mixed characteristic $(0,p)$
(i.e., $\mathrm{char}(K)= 0$ and $\mathrm{char}Kv_K^p = p$). However, we
also present a slightly different proof to the (incomplete) one in
\cite{Koe95}.

To phrase the true definability result for $v_K^p$
we should also take care of cases
where $v_K^p$ is, as it were,
only definable `by accident',
that is, definable for the wrong reason.
For example, there might be another prime $q\neq p$
with $v_K^q = v_K^p$,
where $v_K^q$ is `truly' definable, but $v_K^p$ is not.
To pin this down
we say that $v_K^p$ is {\em $\emptyset$-definable as such}
if there is a parameter-free ${\mathcal L}_\textrm{ring}$-formula $\phi(x)$
such that, for all fields $L$ elementarily equivalent to $K$
in $\mathcal{L}_\textrm{ring}$
(which we denote by $L\equiv K$),
${\mathcal O}_{v_L^p} = \phi (L)$.
With this terminology we not only get
a precise criterion for true (= `as such') definability of $v_K^p$,
but also the most uniform definition of $v_K^p$ that one could wish for:
a single ${\mathcal L}_\textrm{ring}$-formula $\phi_p(x)$
does it for all of them:

\begin{MainTheorem}
For each prime $p$
there is a parameter-free ${\mathcal L}_\textrm{ring}$-formula $\phi_p(x)$
such that for any field $K$ with 
either $\mathrm{char}(K) = p$ or $\zeta_p\in K$
the following are equivalent:
\begin{enumerate}
\item 
$\phi_p$ defines $v_K^p$ as such.
\item $v_K^p$ is $\emptyset$-definable as such.
\item $p\neq2$ or $Kv_K^p$ is not Euclidean.
\end{enumerate}
\end{MainTheorem}

The paper is organized as follows. We recall well-known 
definitions and facts about $p$-henselian valuations in the second section. 
In the third section, we give our Main Theorem and draw some conclusions 
from
it. The Main Theorem is then proven in section 4. Finally, we apply the Main 
Theorem to $t$-henselian fields in the last section. Improving a result of 
Koenigsmann (Theorem 4.1 in \cite{Koe94}), we show that any
$t$-henselian field which is neither separably closed nor real closed admits a
definable valuation inducing the (unique) $t$-henselian topology.

\section{p-henselian valuations and their canonix}
Throughout this section, let $K$ be a field and $p$ a prime. We use
the following notation: If $v$ is a valuation on $K$, we write
$\mathcal{O}_v$ for the valuation ring, $\mathfrak{m}_v$ for the maximal
ideal, $Kv$ for the residue field and $vK$ for the value group of $(K,v)$.
For $a \in \mathcal{O}_v$, we use $\bar{a}$ to denote its image in $Kv$.
\begin{Def}
We define $K(p)$ to be the compositum of all Galois extensions of $K$ of 
$p$-power degree. 
A valuation $v$ on $K$ is called
\emph{$p$-henselian} if $v$ extends uniquely to $K(p)$.
We call $K$ \emph{$p$-henselian} if $K$ 
admits a non-trivial
$p$-henselian valuation. 
\end{Def}
Clearly, this definition only imposes a condition on $v$ if $K$ admits
Galois extensions
of $p$-power degree. 

\begin{Prop}[\cite{Koe95}, Propositions 1.2 and 1.3] \label{phenseq}
For a valued field $(K,v)$, the following are equivalent:
\begin{enumerate}
\item $v$ is $p$-henselian,
\item $v$ extends uniquely to every Galois extension of $K$ of $p$-power
degree,
\item $v$ extends uniquely to every Galois extension of $K$ of degree
$p$,
\item for every polynomial $f \in {\mathcal O}_v$ which splits in $K(p)$ and
every $a \in {\mathcal O}_v$ with $\bar{f}(\overline{a}) = 0$ and
$\bar{f'}(\overline{a}) \neq 0$, there exists $\alpha
\in {\mathcal O}_v$ with $f(\alpha)=0$ and $\overline{\alpha}=\overline{a}$.
\end{enumerate}
\end{Prop}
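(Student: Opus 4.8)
The plan is to run the cycle of implications $(1)\Leftrightarrow(2)$, $(2)\Rightarrow(3)$, $(3)\Rightarrow(2)$, $(1)\Rightarrow(4)$ and $(4)\Rightarrow(3)$, together with the elementary facts of valuation theory (Chevalley's extension theorem, conjugacy of extensions in normal extensions, and the approximation theorem). The equivalence $(1)\Leftrightarrow(2)$ is essentially bookkeeping: writing $K(p)=\bigcup_i F_i$ as the directed union of its finite subextensions, any extension of $v$ to $K(p)$ is determined by its restrictions to the $F_i$, so by Chevalley's theorem $v$ extends uniquely to $K(p)$ if and only if it extends uniquely to each $F_i$; since $K(p)/K$ is Galois, every $F_i$ is contained in a finite Galois subextension, and the finite Galois subextensions of $K(p)/K$ are precisely the Galois extensions of $K$ of $p$-power degree, so this is equivalent to $(2)$. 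The implication $(2)\Rightarrow(3)$ is trivial.

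For $(3)\Rightarrow(2)$ I would argue by contraposition. Suppose $v$ has more than one extension to some Galois extension $L/K$ of $p$-power degree; put $G=\mathrm{Gal}(L/K)$, fix an extension $w$ of $v$ to $L$, and let $G_w=\{\sigma\in G : w\circ\sigma=w\}$ be its decomposition group, a \emph{proper} subgroup of the nontrivial $p$-group $G$. Then $G_w$ lies in a maximal subgroup $H$ of $G$, and $H$ is normal of index $p$; set $M=L^H$, so $M/K$ is Galois of degree $p$. The key point is to count the extensions of $v$ to $M$: they form a single $\mathrm{Gal}(M/K)=G/H$-orbit whose stabiliser of $w|_M$ is $G_wH/H$ — here one uses that two extensions of $v$ to $L$ agreeing on $M$ differ by an element of $\mathrm{Gal}(L/M)=H$, by conjugacy of extensions in the normal extension $L/M$ — so the number of extensions of $v$ to $M$ is $[G:G_wH]$. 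Since $G_w\subseteq H$, this equals $[G:H]=p>1$, contradicting $(3)$.

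For $(1)\Rightarrow(4)$: when $v$ is $p$-henselian its unique extension $\tilde v$ to $K(p)$ is $\mathrm{Gal}(K(p)/K)$-invariant, because $\tilde v\circ\sigma$ is again an extension of $v$. Given $f\in\mathcal{O}_v[X]$ — which we may assume monic after a routine reduction — splitting as $\prod_i(X-\alpha_i)$ over $K(p)$, all $\alpha_i$ lie in $\mathcal{O}_{\tilde v}$, and if $\bar a$ is a root of $\bar f$ with $\overline{f'}(\bar a)\neq 0$ then exactly one $\alpha_i$, say $\alpha_1$, has residue $\bar a$. For every $\sigma\in\mathrm{Gal}(K(p)/K)$ the element $\sigma(\alpha_1)$ is again a root of $f$, and its residue is the image of $\bar a$ under the residue automorphism induced by $\sigma$, which fixes $Kv\ni\bar a$; hence $\sigma(\alpha_1)$ has residue $\bar a$, forcing $\sigma(\alpha_1)=\alpha_1$. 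Thus $\alpha_1\in K\cap\mathcal{O}_{\tilde v}=\mathcal{O}_v$ is a root of $f$ with residue $\bar a$, as required.

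Finally $(4)\Rightarrow(3)$, again by contraposition. If $v$ has more than one extension to a Galois extension $L/K$ of degree $p$, it has exactly $p$ of them, $w_1,\dots,w_p$, pairwise inequivalent and permuted regularly by $\mathrm{Gal}(L/K)=\langle\sigma\rangle$. By the approximation theorem pick $\eta\in L$ with $w_1(\eta-1)>0$ and $w_j(\eta)>0$ for $j\geq 2$; then $\eta\notin K$ (otherwise its $v$-residue would be both $\bar 1$ and $\bar 0$), so $L=K(\eta)$, and the minimal polynomial $g=\prod_{i=0}^{p-1}(X-\sigma^i\eta)$ of $\eta$ over $K$ lies in $\mathcal{O}_v[X]$, since every conjugate $\sigma^i\eta$ is $w_1$-integral ($w_1\circ\sigma^i$ being one of the $w_j$). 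Moreover $g$ splits in $L\subseteq K(p)$, and reduction with respect to $w_1$ gives $\bar g=(X-1)X^{p-1}$, so $\bar 1\in Kv$ is a root of $\bar g$ with $\overline{g'}(\bar 1)\neq 0$; but $g$ is irreducible of degree $p>1$ over $K$ and so has no root in $\mathcal{O}_v$, contradicting $(4)$. I expect the main obstacle to be the group-theoretic step $(3)\Rightarrow(2)$: choosing the index-$p$ subfield $M$ correctly and tracking precisely how the decomposition group and the number of extensions behave under restriction from $L$ to $M$; the reduction of $(4)$ to monic polynomials and the verification that $g\in\mathcal{O}_v[X]$ are minor technicalities by comparison.
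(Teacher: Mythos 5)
The paper does not prove this proposition: it is quoted verbatim from \cite{Koe95} (Propositions~1.2 and~1.3), so there is no in-paper argument to compare against. Judged on its own, your proof is essentially correct and follows the standard route: (1)$\Leftrightarrow$(2) via Chevalley and the fact that $\mathrm{Gal}(K(p)/K)$ is pro-$p$, (3)$\Rightarrow$(2) by pushing the decomposition group into a maximal (hence normal, index-$p$) subgroup and computing the number of extensions to the fixed field as $[G:G_wH]=p$, (1)$\Rightarrow$(4) via Galois-invariance of the unique prolongation $\tilde v$, and (4)$\Rightarrow$(3) by manufacturing the Eisenstein-like test polynomial with the approximation theorem for the $p$ pairwise incomparable conjugate extensions.

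One place deserves more care than the phrase ``after a routine reduction'': in (1)$\Rightarrow$(4) you factor $f=\prod_i(X-\alpha_i)$ and deduce that all $\alpha_i\in\mathcal O_{\tilde v}$, but this uses that $f$ is monic; for general $f\in\mathcal O_v[X]$ the leading coefficient may lie in $\mathfrak m_v$ and then some roots have negative $\tilde v$-value, so $\bar f$ is not simply $\prod_i(X-\bar\alpha_i)$. The reduction is genuinely a small extra step: e.g.\ replace $f$ by $g(X)=f(X+a)$, note $\tilde v(g_0)>0$ and $\tilde v(g_1)=0$, and read off from the Newton polygon of $g$ over $(K(p),\tilde v)$ that there is exactly one root of positive value, which is then fixed by $\mathrm{Gal}(K(p)/K)$ by the same invariance argument. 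As you phrase it, the cycle actually only establishes the equivalence of (1)--(3) with the monic version of (4); the reduction is needed to recover (4) as stated. Everything else (the normality of maximal subgroups of $p$-groups, the regularity of the Galois action in (4)$\Rightarrow$(3), the applicability of the approximation theorem to pairwise incomparable conjugates) is correct.
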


As for fields carrying a henselian valuation, there is a canonical
$p$-henselian valuation:

\begin{Thm}[\cite{Br76}, Corollary 1.5]
If $K$ carries two independent non-trivial 
$p$-hen\-se\-lian valuations, then $K = K(p)$.
\end{Thm}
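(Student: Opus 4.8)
The plan is to run the $p$-henselian analogue of the classical F.~K.~Schmidt theorem. Assume, for a contradiction, that $v_1$ and $v_2$ are independent non-trivial $p$-henselian valuations on $K$ while $K\neq K(p)$. The idea is to use the Approximation Theorem for independent valuations, together with the full Hensel's Lemma available in the \emph{henselizations} of $(K,v_1)$ and $(K,v_2)$, to manufacture a degree-$p$ Galois extension $M/K$ to which $v_1$ has two distinct prolongations --- contradicting Proposition~\ref{phenseq}.

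First I would reduce to the case $\zeta_p\in K$ or $\mathrm{char}(K)=p$. If $\mathrm{char}(K)\neq p$ and $\zeta_p\notin K$, replace $K$ by $K':=K(\zeta_p)$: as $[K':K]$ divides $p-1$, the maximal pro-$p$ quotient of $\mathrm{Gal}(K^{\mathrm{sep}}/K')$ still surjects onto that of $\mathrm{Gal}(K^{\mathrm{sep}}/K)$, so $K'\neq K'(p)$; any prolongations of $v_1,v_2$ to $K'$ stay pairwise independent (a common non-trivial coarsening on $K'$ restricts to one on $K$); and they remain $p$-henselian on $K'$, since a prime-to-$p$ base change does not disturb $p$-henselianity. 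So assume $\zeta_p\in K$ (the case $\mathrm{char}(K)=p$ is entirely parallel, with the Artin--Schreier operator $\wp(x)=x^p-x$ in place of $x\mapsto x^p$). Since $K\neq K(p)$, Kummer theory gives some $b\in K^{\times}\setminus(K^{\times})^p$.

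Write $K_i^h$ for a henselization of $(K,v_i)$; these are honestly henselian. Because $v_2$ is $p$-henselian, by Proposition~\ref{phenseq} it has a unique prolongation to the degree-$p$ Galois extension $K(\sqrt[p]{b})$, which (as $\zeta_p\in K_2^h$) forces $X^p-b$ to be irreducible over $K_2^h$, i.e.\ $b\notin(K_2^h)^p$. Using the full Hensel's Lemma in $K_i^h$, fix $N\in v_1K$ with $1+\mathfrak{m}_{v_1}^N\subseteq(K_1^h)^p$, and $\gamma\in v_2K$, depending only on $b$, $v_2$ and $p$, such that $v_2(a-b)\geq\gamma$ implies $a\notin(K_2^h)^p$ for every $a\in K$. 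Now, since $v_1$ and $v_2$ are independent, the Approximation Theorem yields $a\in K$ with $v_1(a-1)\geq N$ and $v_2(a-b)\geq\gamma$. Then $a\in(K_1^h)^p$ but $a\notin(K_2^h)^p$; in particular $a\notin(K^{\times})^p$, so $X^p-a$ is irreducible over $K$ and $M:=K(\sqrt[p]{a})$ is Galois of degree $p$. But $v_1$ is $p$-henselian, so Proposition~\ref{phenseq} gives it a unique prolongation to $M$, forcing $X^p-a$ to be irreducible over $K_1^h$, i.e.\ $a\notin(K_1^h)^p$ --- contradicting $a\in(K_1^h)^p$. Hence $K=K(p)$.

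I expect the only delicate point to be the reduction in the second paragraph, specifically the claim that $p$-henselianity passes to the prolongations of $v_1,v_2$ to $K(\zeta_p)$; it relies on $[K(\zeta_p):K]$ being prime to $p$, so that the Galois $p$-extensions of $K(\zeta_p)$ are controlled by those of $K$. Everything else --- the Hensel estimates inside the genuinely henselian fields $K_1^h,K_2^h$ (which is why mixed characteristic causes no trouble here) and the approximation step --- is routine.
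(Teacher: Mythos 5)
The paper does not prove this theorem---it simply cites it as Corollary~1.5 of Br\"ocker's paper \cite{Br76}---so there is no ``paper's proof'' to compare against; what you have written is a reconstruction along the lines of F.~K.~Schmidt's classical theorem (two independent henselian valuations force separable closedness), adapted to the $p$-henselian setting. The core of your argument is sound: once $\zeta_p\in K$ (or $\mathrm{char}\,K=p$), the combination of Kummer/Artin--Schreier theory, the openness of $(K_i^h)^{\times p}$ (resp.\ $(K_i^h)^{(p)}$) inside the henselizations, and the Approximation Theorem for independent valuations produces a Galois extension of degree $p$ to which $v_1$ must extend in more than one way, contradicting Proposition~\ref{phenseq}. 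The bookkeeping in mixed characteristic, via $1+p^2\mathfrak{m}\subseteq (K^h)^{\times p}$, is exactly why working in the honest henselization is the right move.

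The one place where your write-up has a real gap is precisely the one you flag: the reduction to $\zeta_p\in K$. Your stated reason---that because $[K(\zeta_p):K]$ is prime to $p$, ``the Galois $p$-extensions of $K(\zeta_p)$ are controlled by those of $K$''---is false as stated. In general $K(\zeta_p)(p)$ strictly contains $K(p)\cdot K(\zeta_p)$: for instance with $K=\mathbb{Q}$ and $p=3$, the extension $\mathbb{Q}(\zeta_3,\sqrt[3]{2})/\mathbb{Q}(\zeta_3)$ is Galois of degree $3$ but is not contained in $K(3)\cdot K(\zeta_3)$, since its Galois closure over $\mathbb{Q}$ has group $S_3$, which is not a quotient of $(\text{pro-}3)\times\mathbb{Z}/2$. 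So one cannot infer $p$-henselianity of the prolongations of $v_i$ to $K(\zeta_p)$ merely by pulling Galois $p$-extensions down to $K$. The assertion that $p$-henselianity does pass to prolongations along prime-to-$p$ extensions is true and is the substantive input here, but it requires a genuine argument (for instance via the ramification-theoretic structure of the decomposition group, or via the algebraic criteria in \cite{Koe95}); it is not a formal consequence of $[K(\zeta_p):K]\nmid p$. You should either cite this fact explicitly or supply its proof; as written, the reduction step does not stand on its own. Everything else---independence of the prolongations, $K(\zeta_p)\neq K(\zeta_p)(p)$ when $K\neq K(p)$, and the approximation argument itself---is correct.
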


Assume that $K \neq K(p)$. We divide the class of 
$p$-henselian valuations on $K$
 into
two subclasses,
$$H^p_1(K) = \Set{v\; p\textrm{-henselian on } K | Kv \neq Kv(p)}$$
and
$$H^p_2(K) = \Set{ v\; p\textrm{-henselian on } K | Kv = Kv(p) }.$$ 
One can deduce that any valuation $v_2 \in H^p_2(K)$ 
is \emph{finer} than any $v_1 \in H^p_1(K)$, i.e. 
${\mathcal O}_{v_2} \subsetneq {\mathcal O}_{v_1}$,
and that any two valuations in $H^p_1(K)$ are comparable.
Furthermore, if $H^p_2(K)$ is non-empty, then there exists a unique coarsest
valuation
$v_K^p$ in $H^p_2(K)$; otherwise there exists a unique finest 
valuation $v_K^p \in H^p_1(K)$.
In either case, $v_K^p$ is called the \emph{canonical $p$-henselian valuation}.
If $K$ is $p$-henselian then $v_K^p$ is non-trivial.

\section{The main theorem and some consequences}
We want to find a uniform definition of
the canonical $p$-henselian valuation. As $p$-henselianity is an elementary
property, any sufficiently 
uniform definition of $v_K^p$ on some field $K$ will also define the
canonical $p$-henselian valuation in any field elementarily equivalent to $K$.
This motivates the following
\begin{Def}
Let $K$ be a field, assume that
$K \neq K(p)$ and that $\zeta_p \in K$ in case $\mathrm{char}(K)\neq p$.
We say that $v_K^p$ is \emph{$\emptyset$-definable as such} 
if there is a parameter-free
$\mathcal{L}_\textrm{ring}$-formula $\phi_p(x)$ such that
$$\phi_p(L) = \mathcal{O}_{v_L^p}$$
holds in any $L \equiv K$.
\end{Def}

Recall that a field $F$ is called Euclidean if $[F(2):F]=2$. This is an 
elementary property in $\mathcal{L}_\textrm{ring}$: Every Euclidean field is uniquely
ordered, the positive elements being exactly the squares. By the well-known
results of Artin-Schreier/Becker (cf.\,\cite[Theorem 4.3.5]{EP05}), 
Euclidean
fields are the only fields for which $F(p)$ can be a proper finite extension of
$F$.

We are now in a position to state our main theorem:
\begin{MaiTheorem} \label{main}
Fix a prime $p$. There exists a parameter-free 
$\mathcal{L}_\textrm{ring}$-formula $\phi_p(x)$
such that for any field $K$ with
either $\mathrm{char}(K)=p$ or $\zeta_p \in K$
the following are equivalent:
\begin{enumerate}
\item $\phi_p$ defines ${v_K^p}$ as such. 
\item $v_K^p$ is $\emptyset$-definable as such.
\item $p\neq 2$ or $Kv_K^p$ is not Euclidean.
\end{enumerate}
\end{MaiTheorem}
Note that it may well happen that $v_K^p$ is definable, but not 
definable as such: 

\begin{Ex}
Consider the field $K = \mathbb{R}((t))$. Then the canonical 
$2$-henselian valuation coincides with the power series valuation as 
$\mathbb{R}$ is not ($2$-)henselian. Furthermore, we have $v_K^p=v_K^2$
for all primes $p$. In particular, $v_K^2$ is 
$\emptyset$-definable, 
say via the $\mathcal{L}_\textrm{ring}$-formula $\phi(x)$. Now, we have
$$K =\mathbb{R}((t)) \equiv \mathbb{R}((s^{\mathbb{Q}}))((t)) 
=: L$$
by the Ax-Kochen/Ersov Theorem (\cite[Theorem 4.6.4]{PD}) 
since $\mathbb{R}((s^{\mathbb{Q}}))$
is real closed. However, $\phi(L)$ defines a henselian valuation on $L$
with value group elementarily equivalent to $\mathbb{Z}$. Thus, we get
$$\mathcal{O}_{v_L^2}  \subsetneq 
\phi(L) = \mathbb{R}((s^{\mathbb{Q}}))[[t]],$$ 
as the canonical $2$-henselian valuation on $L$ has residue field $\mathbb{R}$.
Hence, $v_K^2$ is $\emptyset$-definable but not $\emptyset$-definable as such.
\end{Ex}

Before we prove the theorem, we draw some conclusions from it.
\begin{Obs} \label{2def}
Let $K \neq K(2)$, and assume that $Kv_K^2$ is Euclidean.
Then 
\begin{enumerate}[(a)]
\item the coarsest $2$-henselian valuation $v_K^{2*}$ on $K$ 
which has Euclidean
residue field is $\emptyset$-definable and 
\item there is an
$\mathcal{L}_\textrm{ring}$-sentence $\epsilon$ such that for any field $K$
$$K \models \epsilon \Longleftrightarrow K \textrm{ is not Euclidean and } 
Kv_K^2 \textrm{ is Euclidean.}$$
\end{enumerate}
\end{Obs}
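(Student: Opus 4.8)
The plan is to deduce both parts from the Main Theorem applied to the prime $p=2$, using the explicit formula $\phi_2(x)$ whose existence it guarantees. First I would recall the structure theory from Section 2: since $K\neq K(2)$ and $Kv_K^2$ is Euclidean, in particular $Kv_K^2\neq (Kv_K^2)(2)$, so $v_K^2$ lies in $H_2^2(K)$ is impossible; hence $v_K^2$ is the unique finest $2$-henselian valuation on $K$, lying in $H_1^2(K)$, and every $2$-henselian valuation on $K$ is coarser than $v_K^2$ and comparable with it. Among the $2$-henselian valuations on $K$ with Euclidean residue field, the Euclidean-residue condition is elementary, and one checks using comparability that such valuations form a chain whose coarsest member $v_K^{2*}$ exists: concretely, $v_K^{2*}$ is obtained from $v_K^2$ by pushing forward along the largest convex subgroup of $v_K^2 K$ for which the residue field stays Euclidean — equivalently, its residue field is the unique maximal $2$-henselian-residue field in the chain of coarsenings of $v_K^2$ that is still Euclidean. (That this coarsening is itself $2$-henselian follows since a coarsening of a $p$-henselian valuation is $p$-henselian.)

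For part (a), the key observation is that $v_K^{2*}$ has Euclidean, hence non-$2$-closed, residue field, so $v_K^{2*}\in H_1^2(Kv\text{-side})$ — more precisely, $v_K^{2*}$ itself plays the role of the canonical $2$-henselian valuation for a related field. The cleanest route: note that over the residue field $Kv_K^{2*}$, being Euclidean, the canonical $2$-henselian valuation is trivial, and that $v_K^{2*}$ is $2$-henselian with $Kv_K^{2*}\neq (Kv_K^{2*})(2)$; by the uniqueness statement in Section 2 for $H_1^2(K)$ one identifies $v_K^{2*}$ intrinsically. Then I would apply the \emph{failure} clause of the Main Theorem: since $Kv_K^2$ is Euclidean, $\phi_2$ does \emph{not} define $v_K^2$; but the Main Theorem's proof (which we may invoke) in fact produces $\phi_2$ so that $\phi_2(K)$ always defines \emph{a} $2$-henselian valuation, and in the Euclidean-residue case that valuation is precisely the coarsest $2$-henselian one with Euclidean residue field, i.e. $v_K^{2*}$. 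Hence $v_K^{2*}=\phi_2(K)$ is $\emptyset$-definable, uniformly in $K$.

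For part (b), the sentence $\epsilon$ is built to say: ``$K$ is not Euclidean, and the valuation defined by $\phi_2(x)$ has Euclidean residue field.'' The first conjunct is a single $\mathcal{L}_\textrm{ring}$-sentence by the Artin--Schreier/Becker characterisation recalled before the Main Theorem. For the second conjunct, ``$\phi_2(K)$ is a valuation ring with Euclidean residue field'' is first-order in $\phi_2$: one writes down that $\phi_2(K)$ is closed under addition and multiplication, contains $1$, and for every $x$ either $x$ or $x^{-1}$ lies in it, and then that modulo the maximal ideal $\{x : x^{-1}\notin\phi_2(K)\}$ the residue ring has $[(\cdot)(2):\cdot]=2$, again elementary. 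Combining, $K\models\epsilon$ iff $K$ is not Euclidean and $v_K^{2*}$ (which equals $\phi_2(K)$ whenever $Kv_K^2$ is Euclidean, by part (a)) has Euclidean residue field; and conversely, when $Kv_K^2$ is \emph{not} Euclidean the Main Theorem gives $\phi_2(K)=\mathcal{O}_{v_K^2}$, whose residue field is $2$-closed, hence not Euclidean, so $\epsilon$ fails — matching the claim.

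The main obstacle I expect is part (a): making rigorous the claim that the single formula $\phi_2$ from the Main Theorem already picks out $v_K^{2*}$ in the Euclidean-residue case, rather than merely \emph{some} $2$-henselian valuation. This requires looking into how $\phi_2$ is constructed in the proof of the Main Theorem (Section 4) — presumably via a ``$p$-divisibility / Galois-theoretic'' definition that, on a field with Euclidean $2$-henselian residue field, cannot see below the Euclidean level and so stabilises exactly at $v_K^{2*}$. One must check this does not depend on parameters and is genuinely uniform across all such $K$; once that is in hand, part (b) is a routine translation into a first-order sentence.
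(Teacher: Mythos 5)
Your plan for part (a) hinges on the claim that the formula $\phi_2$ from the Main Theorem, applied to a field $K$ with $Kv_K^2$ Euclidean, defines precisely $\mathcal{O}_{v_K^{2*}}$. You flag this as ``the main obstacle I expect,'' but it is more than an obstacle: it is false, and the Main Theorem gives you no purchase on it. The Main Theorem only asserts an \emph{equivalence} of three conditions; when condition (3) fails (i.e.\ when $Kv_K^2$ is Euclidean) it tells you that $\phi_2$ does not define $v_K^2$ as such, and is silent about what $\phi_2(K)$ is. Worse, if you inspect the Beth characterisation used in the proof of the Main Theorem, clause (3) there reads ``if $Kv\neq Kv(p)$, then $Kv$ is not $p$-henselian and, in case $p=2$, not Euclidean.'' In the Euclidean-residue case this clause fails for $v_K^2$ (Euclidean residue field), fails for $v_K^{2*}$ (also Euclidean residue field), and fails for every proper coarsening $w$ of $v_K^2$ as well, because $Kw$ carries the non-trivial $2$-henselian valuation $\bar{v}$ induced by $v_K^2$, hence $Kw$ \emph{is} $2$-henselian. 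So no $2$-henselian valuation on $K$ satisfies that characterisation, the Beth argument there gives no implicit definition on such $K$, and the identification $\phi_2(K)=\mathcal{O}_{v_K^{2*}}$ cannot be extracted from it. (Appealing instead to Corollary~\ref{Cmain} would be circular, since that corollary is itself proved using Observation~\ref{2def}.)

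The paper's actual proof avoids all of this by running a \emph{fresh} Beth argument with a different characterisation tailored to $v_K^{2*}$: $v$ is $2$-henselian, $Kv$ is Euclidean, and $vK$ has no non-trivial convex $2$-divisible subgroup. The key point making this a genuine characterisation is the observation that, for a $2$-henselian valuation with Euclidean residue field, $2$-divisible convex subgroups of the value group correspond exactly to coarsenings that still have Euclidean residue field; hence the third condition forces $v=v_K^{2*}$. Beth then yields a new formula $\eta(x)$, and the sentence $\epsilon$ of part (b) is built from $\eta$, not from $\phi_2$. Your part (b) would go through once part (a) is repaired along these lines, but as written your part (a) has a genuine gap: you need a characterisation that $v_K^{2*}$ actually satisfies, and $\phi_2$'s characterisation is deliberately designed to exclude Euclidean residue fields.
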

\begin{proof} 
\begin{enumerate}[(a)] 
\item We amend the proof of 3.2 in \cite{Koe95} to our needs. 
As $Kv_K^2$ is Euclidean,
all $2$-henselian valuations are comparable and 
coarsenings of $v_K^2$. If $K$ is Euclidean, then the coarsest $2$-henselian
valuation with Euclidean residue field is the trivial one and thus 
$\emptyset$-definable. Else, $K$ is not 
Euclidean and there is a coarsest (non-trivial) 
$2$-henselian valuation $v_K^{2*}$ such that
$Kv_K^{2*}$ is Euclidean. 

We use Beth's Definability Theorem to show that $v_K^{2*}$ is definable. 
If we add a symbol for $\mathcal{O}_v$ to the ring language, then 
we claim that $\mathcal{O}_v={\mathcal O}_{v_K^{2*}}$ 
is axiomatized by the properties
\begin{enumerate}[(i)]
\item $v$ is $2$-henselian,
\item $Kv$ is Euclidean,
\item no non-trivial convex subgroup of $vK$ is 
$2$-divisible. This is an elementary property of the ordered abelian group 
$vK$ (and thus of the valued field $(K,v)$ in 
$\mathcal{L}_{val}=\mathcal{L}_{ring} \cup\{ \mathcal{O}_v\}$) as it is axiomatized by the sentence
$$\forall \alpha \,( \alpha > 0 \rightarrow \exists \gamma\forall \delta \, 
(0 < \gamma \leq \alpha\, \wedge \, \gamma \neq \delta+\delta)).$$   
\end{enumerate}
Clearly, $v_K^{2*}$ satisfies the first two of these axioms. Furthermore, since
$Kv_K^{2*}$ is Euclidean but $K$ is not, $v_K^{2*}K$ is not $2$-divisible.
Note that for any $2$-henselian valuation with Euclidean residue field,
$2$-divisible convex subgroups correspond exactly to coarsenings
with Euclidean residue field.
Thus, as every proper coarsening of $v_K^{2*}$ has non-Euclidean residue field, 
$v_K^{2*}K$ has no non-trivial convex $2$-divisible subgroups.
Since all $2$-henselian valuations are comparable, 
$v_K^{2*}$ is the only $2$-henselian valuation with Euclidean residue 
field and value group having no non-trivial $2$-divisible convex
subgroup, hence 
it is indeed characterized by these properties.

As the same characterization gives $v_L^{2*}$ in any $L \equiv K$ and
no parameters were needed, $v_K^{2*}$ is 
$\emptyset$-definable by Beth's Definability Theorem 
(see \cite{Hod97}, Theorem 5.5.4), say via the 
$\mathcal{L}_\textrm{ring}$-formula $\eta(x)$.

\item The sentence $\epsilon$ is now given by
\begin{align*}
\epsilon\, \equiv\; & K \textrm{ not Euclidean and }\\
&\eta(x) \textrm{ defines a }2\textrm{-henselian 
valuation ring }\mathcal{O}_v \subseteq K
\textrm{ with } Kv \textrm{ Euclidean.}
\end{align*}
\end{enumerate}
\end{proof}

We can now also give a version of Theorem \ref{main} which includes the 
special case of Euclidean residue fields:
\begin{Cor} \label{Cmain}
Let $p$ be a prime and 
consider the (elementary) class of fields
$$\mathcal{K} = \Set{ K | K \;p\textrm{-henselian, with }\zeta_p \in K
\textrm{ in case } \mathrm{char}(K)\neq p}$$
There is a parameter-free $\mathcal{L}_\textrm{ring}$-formula $\psi_p(x)$
such that 
\begin{enumerate}
\item if $p \neq 2$ or $Kv_2$ is not Euclidean, then $\psi_p(x)$ defines 
the valuation ring of the canonical
$p$-henselian valuation $v_K^p$, and
\item if $p=2$ and $Kv_2$ is Euclidean, then $\psi_p(x)$ defines the valuation
ring of the
coarsest $2$-henselian valuation $v_K^{2*}$ such that
$Kv_K^{2*}$ is Euclidean.
\end{enumerate}
\end{Cor}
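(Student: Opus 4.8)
The idea is to assemble $\psi_p$ from three ingredients already available: the formula $\phi_p$ from the Main Theorem, the formula $\eta(x)$ and the sentence $\epsilon$ from Observation~\ref{2def}, and the $\mathcal L_{\mathrm{ring}}$-sentence $\mathrm{Eucl}$ expressing ``$K$ is Euclidean'' (Artin--Schreier/Becker). For $p\neq 2$ no combining is needed: I would simply put $\psi_p:=\phi_p$. Indeed every $K\in\mathcal K$ has $K\neq K(p)$ and satisfies $\mathrm{char}(K)=p$ or $\zeta_p\in K$, and since $p\neq 2$ clause (3) of the Main Theorem holds; hence $\phi_p$ defines $v_K^p$ as such, clause (1) of the corollary holds, and clause (2) is vacuous.

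For $p=2$ I would take
\[
\psi_2(x)\ :=\ \bigl(\mathrm{Eucl}\wedge x=x\bigr)\ \vee\ \bigl(\neg\mathrm{Eucl}\wedge\epsilon\wedge\eta(x)\bigr)\ \vee\ \bigl(\neg\mathrm{Eucl}\wedge\neg\epsilon\wedge\phi_2(x)\bigr),
\]
which is parameter-free, and then check its three disjoint, exhaustive cases on a given $K\in\mathcal K$ (the same verification goes through in every $L\equiv K$, since $\mathrm{Eucl},\epsilon$ are sentences and $\eta,\phi_2$ are parameter-free). If $K$ is Euclidean, the trivial valuation is $2$-henselian with Euclidean residue field $K$, hence is the coarsest such, so $v_K^{2*}$ is trivial and $\psi_2(K)=K=\mathcal O_{v_K^{2*}}$; one must then check that $Kv_K^2$ is Euclidean so that this genuinely falls under clause (2) (see below). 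If $K$ is not Euclidean and $K\models\epsilon$, then $Kv_K^2$ is Euclidean, we are in clause (2), and $\psi_2(K)=\eta(K)=\mathcal O_{v_K^{2*}}$ by Observation~\ref{2def}(a). If $K$ is not Euclidean and $K\not\models\epsilon$, then, $K$ being non-Euclidean, $\neg\epsilon$ forces $Kv_K^2$ to be non-Euclidean; so we are in clause (1) and $\psi_2(K)=\phi_2(K)=\mathcal O_{v_K^2}$ by the Main Theorem.

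The step I expect to carry the real weight is the assertion used in the first case: that a Euclidean $K$ has $Kv_K^2$ Euclidean. Since $K$ is formally real, no $2$-henselian valuation on $K$ can have a $2$-closed residue field of residue characteristic $\neq 2$, for such a residue field contains $\sqrt{-1}$ and, as $X^2+1$ splits in $K(2)=K(\sqrt{-1})$ with a simple residue root, Proposition~\ref{phenseq}(4) would force $\sqrt{-1}\in K$. The delicate point is to rule out residue characteristic $2$ here --- i.e.\ that no formally real field admits a $2$-henselian valuation of mixed characteristic $(0,2)$ --- where I would invoke (or reprove) the structure theory of $p$-henselian valuations with $\zeta_p$ in residue characteristic $p$. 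Granting this, $H^2_2(K)=\emptyset$, so $v_K^2\in H^2_1(K)$ has residue characteristic $\neq 2$; then $1+\mathfrak m_{v_K^2}\subseteq(K^\times)^2$ by Proposition~\ref{phenseq}(4), so $\mathcal O_{v_K^2}$ is convex for the unique ordering of $K$, that ordering descends to $Kv_K^2$, and its positive cone there is exactly the set of squares --- that is, $Kv_K^2$ is Euclidean. This completes Corollary~\ref{Cmain}.
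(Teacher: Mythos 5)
Your construction is correct and is essentially the paper's own approach made fully explicit: both proofs partition $\mathcal K$ into elementary sub-cases and glue case-specific formulas. The paper partitions on ``$Kv_K^2$ Euclidean'' (matching the Corollary's two clauses), leaning on Observation~\ref{2def}, Theorem~\ref{main} and Beth's theorem; you instead spell out a three-way disjunction on ($\mathrm{Eucl}$, $\neg\mathrm{Eucl}\wedge\epsilon$, $\neg\mathrm{Eucl}\wedge\neg\epsilon$). For your first disjunct to land in clause (2) you must know that $K$ Euclidean implies $Kv_K^2$ Euclidean, and you correctly single this out as the substantive step (note the paper's own claim that $\mathcal K_2$ is elementary, justified only via $\epsilon$, quietly relies on the same fact). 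You leave the residue-characteristic-$2$ sub-case of that step as a hand-wave; it closes easily from material already in the paper: if $v$ is $2$-henselian with $\mathrm{char}(Kv)=2$ then $2\in\mathfrak m_v$, so $-7=1+4\cdot(-2)\in 1+4\mathfrak m_v\subseteq(K^\times)^2$ by the $2$-henselian inclusion $1+p^2\mathfrak m_v\subseteq(K^\times)^p$ used in the proof of Theorem~\ref{main}, contradicting formal reality of $K$. Hence a Euclidean (indeed any formally real) field admits no $2$-henselian valuation of residue characteristic $2$, and your remaining argument that $Kv_K^2$ is Euclidean goes through. One small slip: ``every $K\in\mathcal K$ has $K\neq K(p)$'' is false (take $K=\mathbb C$), but this is harmless since $\phi_p$ from the Main Theorem also handles $K=K(p)$.
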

\begin{proof}
Fix a prime $p$. If $p \neq 2$, then the statement follows immediately from
Theorem \ref{main}. Now assume $p=2$.
By Corollary 2.2 in \cite{Koe95} and Observation \ref{2def}, the classes
\begin{align*}
\mathcal{K}_1 =& \Set{ K \textrm{ is }2\textrm{-henselian and }Kv_K^2 
\textrm{ is not Euclidean} } \textrm{ and }\\
\mathcal{K}_2 =& \Set{ K \textrm{ is }2\textrm{-henselian and }Kv_K^2 
\textrm{ is Euclidean} }
\end{align*}
are both elementary in $\mathcal{L}_\textrm{ring}$. 
Thus, using Theorem \ref{main} and Observation \ref{2def}, 
there is a parameter-free
formula $\psi_2$
defining $v_K^2$ in any $K \in \mathcal{K}_1$ and $v_K^{2*}$ in any 
$K \in \mathcal{K}_2$ by Beth's Definability Theorem
(Theorem 5.5.4 in \cite{Hod97}).
\end{proof}

\section{Proof of the main theorem}
\begin{Lem} \label{1}
Let $(K,v)$ be a non-trivially valued field with $\zeta_p \in K$ in case
$\mathrm{char}(K)\neq p$ such that one of
\begin{enumerate}
\item $vK$ has no non-trivial convex $p$-divisible subgroup or
\item $\mathrm{char}(Kv)=p$ and $Kv$ is not perfect
\end{enumerate}
holds. Then we have $Kw \neq Kw(p)$ for any proper coarsening $w$ of $v$.
\end{Lem}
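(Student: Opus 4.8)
The plan is to reduce the statement to a property of the valuation that $v$ induces on the residue field of $w$. Recall that a proper coarsening $w$ of $v$ corresponds to a non-trivial convex subgroup $\Delta$ of $vK$, with $wK\cong vK/\Delta$, and that $v$ induces on $F:=Kw$ a non-trivial valuation $\bar v$ whose value group $\bar v F$ is exactly $\Delta$ (hence a non-trivial convex subgroup of $vK$) and whose residue field is $F\bar v=Kv$. So it suffices to prove: for any valued field $(F,u)$ with $\zeta_p\in F$ in case $\mathrm{char}(F)\neq p$, the equality $F=F(p)$ forces $uF$ to be $p$-divisible, and, if moreover $\mathrm{char}(Fu)=p$, forces $Fu$ to be perfect. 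Granting this and applying it to $(F,u)=(Kw,\bar v)$: if $Kw=Kw(p)$ then $\bar v F=\Delta$ is a non-trivial convex $p$-divisible subgroup of $vK$, contradicting hypothesis (1); and if in addition $\mathrm{char}(Kv)=p$, then $Kv$ is perfect, contradicting hypothesis (2). (The trivial coarsening, $\Delta=vK$, is covered by the same argument.)

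To prove the displayed fact I would split on $\mathrm{char}(F)$. If $\mathrm{char}(F)\neq p$, then $\zeta_p\in F$, so by Kummer theory $F=F(p)$ forces $F^\times=(F^\times)^p$; consequently $uF=p\cdot uF$ is $p$-divisible, and for any unit $t=s^p\in\mathcal{O}_u^\times$ one has $u(s)=0$, hence $\bar t=\bar s^p$, so that $(Fu)^\times=((Fu)^\times)^p$ --- in particular $Fu$ is perfect whenever $\mathrm{char}(Fu)=p$. If $\mathrm{char}(F)=p$ (so $\mathrm{char}(Fu)=p$ as well), then by Artin-Schreier theory $F=F(p)$ forces the map $x\mapsto x^p-x$ to be surjective on $F$: given $\gamma\in uF$ with $\gamma<0$, a solution $x$ of $x^p-x=a$ with $u(a)=\gamma$ must satisfy $u(x)<0$, whence $\gamma=p\,u(x)$, and one deduces that $uF$ is $p$-divisible (the sign being immaterial); and if $Fu$ were imperfect, picking $\bar t\in Fu\setminus(Fu)^p$, lifting to a unit $t$ and choosing $\pi$ with $u(\pi)>0$, the Artin-Schreier polynomial $X^p-X-t\pi^{-p}$ has no root in $F$ (a root $x$ would have $u(x)=-u(\pi)$ and $(x\pi)^p=x\pi^p+t$, forcing $\bar t=(\overline{x\pi})^p$), hence is irreducible and its splitting field is a Galois extension of $F$ of degree $p$, contradicting $F=F(p)$.

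The computations here are routine. The only step requiring some care is the bookkeeping around the decomposition of $v$ into $w$ followed by the induced valuation $\bar v$ on $Kw$ --- in particular, that $\bar v F$ really is a convex subgroup of $vK$ and that $(Kw)\bar v=Kv$ --- together with a short check that the standing hypothesis on $\zeta_p$ is enough in each case: $\zeta_p\in K$ reduces to a primitive $p$-th root of unity in $Kw$ whenever $\mathrm{char}(Kw)\neq p$, and $\mathrm{char}(Kv)=p$ forces $\mathrm{char}(Kw)\in\{0,p\}$. I do not expect a serious obstacle.
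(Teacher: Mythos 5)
Your proposal is correct and, modulo reorganization, uses the same Kummer/Artin--Schreier computations as the paper's proof. The paper splits by which hypothesis (1) or (2) holds and then by $\mathrm{char}(Kw)$, constructing an explicit degree-$p$ Galois extension of $Kw$ in each of the four cases; you instead package the four cases into one clean contrapositive statement about $p$-closed valued fields (splitting only by $\mathrm{char}(F)$ and extracting both $p$-divisibility of the value group and perfectness of the residue field at once), but the lifting arguments and the specific polynomials $X^p-X-t\pi^{-p}$, etc., are the same ones the paper uses.
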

\begin{proof}
Let $w$ be a proper coarsening of $v$ and let $\Delta \leq vK$ be the
non-trivial convex subgroup of $vK$ with $wK=vK/\Delta$. Then
$v$ induces a valuation $\bar{v}$ on $Kw$ with residue field 
$(Kw)\bar{v}=Kv$ and
value group $\bar{v}(Kw)= \Delta$.

We first assume condition $(1)$. Then, we have $\Delta \neq p\Delta$, 
thus there
is some $x \in Kw$ such that $\bar{v}(x) \notin p \Delta$.
If $\mathrm{char}(Kw) \neq p$, then $\zeta_p \in Kw$ and so $Kw(\sqrt[p]{x})$
is a Galois extension of $Kw$ of degree $p$. In particular, we get
$Kw \neq Kw(p)$.

If $\mathrm{char}(Kw)=p$, we may (after possibly replacing $x$ by $x^{-1}$)
assume that $\bar{v}(x) <0$. In this case, the polynomial $X^p-X-x$
has no zero in $Kw$: Any zero $\alpha$ would satisfiy 
$\bar{v}(\alpha)=\frac{1}{p}\bar{v}(x)$. Hence, $Kw(\alpha)$ is again a 
Galois extension of $Kw$ of degree $p$ and so $Kw \neq Kw(p)$ as
required.

Now, we assume condition $(2)$. As $(Kw)\bar{v}=Kv$ is not perfect, 
we can choose some
$a \in \mathcal{O}_{\bar{v}}^\times \subseteq Kw$ with 
$\bar{a}=a + \mathfrak{m}_{\bar{v}} \notin (Kv)^p$. If $\mathrm{char}(Kw) 
\neq p$, then
$a \notin (Kw)^p$ and $Kw(\sqrt[p]{a})$ is a Galois extension of $Kw$
of degree $p$ as before.
If $\mathrm{char}(Kw)=p$, we pick some $x \in \mathfrak{m}_{\bar{v}}$
and observe that any zero $\alpha$ of $X^p-X-ax^{-p}$ satisfies
$\bar{v}(\alpha)=\bar{v}(x^{-1})$. This implies $(\alpha x)^p-a=
\alpha x^p \in \mathfrak{m}_{\bar{v}}$, hence we get
$(\overline{\alpha x})^p = \bar{a}$ (modulo $\mathfrak{m}_{\bar{v}}$). 
Thus, $Kw(\alpha)/Kw$ is once more 
a Galois extension
of degree $p$.
\end{proof}

\begin{Cor} \label{1.1}
Let $(K,v)$ be $p$-henselian such that
$\zeta_p \in K \neq K(p)$, $\mathrm{char}(Kv) \neq p$ 
and $Kv = Kv(p)$ hold.
Then
$$v = v_K^p \Longleftrightarrow vK \textrm{ has no non-trivial $p$-divisible
convex subgroup}.$$
\end{Cor}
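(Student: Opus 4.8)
The plan is to prove both implications, using Lemma \ref{1} for the hard direction and the structure theory of $p$-henselian valuations recalled in Section 2 for the easy one.

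For the implication $(\Rightarrow)$, suppose for contradiction that $v = v_K^p$ but $vK$ has a non-trivial $p$-divisible convex subgroup $\Delta$. Let $w$ be the coarsening of $v$ with $wK = vK/\Delta$; this is a proper coarsening of $v$ since $\Delta \neq 0$, and it is again $p$-henselian, since every coarsening of a $p$-henselian valuation is $p$-henselian. As $v$ induces on $Kw$ a valuation $\bar v$ with value group $\Delta$ and residue field $Kv = Kv(p)$, and $\mathrm{char}(Kv) \neq p$, one checks that $\mathrm{char}(Kw) \neq p$ as well (the residue field $Kw$ has a $p$-henselian valuation $\bar v$ with residue characteristic $\neq p$, and $\zeta_p \in Kw$ since $\zeta_p \in K$ and $\bar v$, being $p$-henselian with residue field $Kv=Kv(p)\ni \zeta_p$, lifts $\zeta_p$ — more simply, $\mathrm{char}(Kw)=p$ would force $\mathrm{char}(Kv)=p$). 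The point of the $p$-divisibility of $\Delta$ is precisely that the argument of Lemma \ref{1} does \emph{not} apply; instead I will show directly that $Kw = Kw(p)$. Indeed, $\bar v$ is $p$-henselian on $Kw$ with $p$-divisible value group $\Delta$ and $p$-closed residue field $Kv$; by the standard description of $p$-closedness under $p$-henselian valuations (the residue field and value group are "$p$-closed" in the appropriate sense), this forces $Kw = Kw(p)$. Hence $w \in H_2^p(K)$ is a proper coarsening of $v = v_K^p$, contradicting that $v_K^p$ is the coarsest element of $H_2^p(K)$.

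For the implication $(\Leftarrow)$, suppose $vK$ has no non-trivial $p$-divisible convex subgroup. Then $v$ satisfies hypothesis $(1)$ of Lemma \ref{1}, so $Kw \neq Kw(p)$ for \emph{every} proper coarsening $w$ of $v$; equivalently, no proper coarsening of $v$ lies in $H_2^p(K)$. On the other hand, $v$ itself is $p$-henselian with $Kv = Kv(p)$, so $v \in H_2^p(K)$, and since $H_2^p(K) \neq \emptyset$ the canonical valuation $v_K^p$ is the coarsest element of $H_2^p(K)$; in particular $v_K^p$ is a coarsening of $v$. If $v_K^p$ were a \emph{proper} coarsening of $v$ we would contradict the previous sentence, so $v = v_K^p$.

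The main obstacle is the step in $(\Rightarrow)$ where one must deduce $Kw = Kw(p)$ from the fact that $\bar v$ is $p$-henselian on $Kw$ with $p$-divisible value group $\Delta$ and residue field $Kv = Kv(p)$: one needs that a $p$-henselian valuation with $p$-divisible value group and $p$-closed residue field has $p$-closed base field. This is the converse direction to Lemma \ref{1} and follows from the Galois-theoretic description of $K(p)$ over a $p$-henselian valued field (e.g.\ via the fact that $\mathrm{Gal}(Kw(p)/Kw)$ is, up to the contribution of roots of unity, built from $\mathrm{Gal}(Kv(p)/Kv)$ and $\Delta/p\Delta$); since $\mathrm{char}(Kw) \neq p$, the only extensions of degree $p$ come from $p$-th roots of residues or of elements whose value is non-divisible, both of which are excluded here. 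I would cite the relevant statement from \cite{Koe95} or \cite{EP05} rather than reprove it.
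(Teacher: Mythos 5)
Your proposal is correct and follows essentially the same route as the paper: both directions are handled by combining Lemma \ref{1} (for the ``no $p$-divisible convex subgroup'' direction) with the observation that the coarsening $w$ of $v$ with $wK = vK/\Delta$ for a non-trivial $p$-divisible convex $\Delta$ satisfies $Kw = Kw(p)$, whence $w$ is a proper coarsening of $v$ lying in $H_2^p(K)$. The only difference is cosmetic: where you defer the claim $Kw = Kw(p)$ to the literature on Galois groups of $p$-henselian valued fields, the paper briefly notes that $\bar v$ being $p$-henselian with $p$-divisible value group and $p$-closed residue field of characteristic $\neq p$ forces $Kw = (Kw)^p$, and hence (via $\zeta_p \in Kw$ and Kummer theory) $Kw = Kw(p)$.
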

\begin{proof}
If $vK$ has no non-trivial $p$-divisible convex subgroup, then $v = v_K^p$
follows from the definition of $v_K^p$ and \ref{1}.

Conversely, assume that 
$vK$ has a non-trivial $p$-divisible convex subgroup $\Delta$,
and let $w$ be the corresponding 
coarsening of $v$ with value group $wK = vK/\Delta$.
We show that $Kw = Kw(p)$ holds. Note that the valuation 
$\bar{v}$ induced by $v$ on $Kw$ is $p$-henselian
(\cite{EP05}, Corollary 4.2.7), 
has residue field $Kv= Kv(p)$
and $p$-divisible value group.
As $\mathrm{char}(K) \neq p$, this implies $Kw= (Kw)^p$ and thus $Kw=Kw(p)$.
Hence, $v$ is a proper refinement of $v_K^p$.
\end{proof}

We next treat the case $\mathrm{char}(K)=p$.
\begin{Lem} \label{2}
Let $(K,v)$ be a $p$-henselian valued field such that $\mathrm{char}(K)=p$ 
and $Kv= Kv(p)$ holds. Then
\begin{align*} v = 
v_K^p &\Longleftrightarrow \forall x \in \mathfrak{m}_v \setminus \{0\}: 
\;
x^{-1}\mathcal{O}_v \not\subseteq K^{(p)} = \Set{ x^p-x | x \in K}.
\end{align*}
\end{Lem}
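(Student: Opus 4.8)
The plan is to mirror the structure of Corollary \ref{1.1}, replacing the role of "$p$-divisibility of a convex subgroup" (which detects whether a coarsening kills $K(p)$-extensions via Kummer theory) by the analogous Artin--Schreier condition appropriate to characteristic $p$. Recall that in characteristic $p$ a valuation $w$ satisfies $Kw = Kw(p)$ precisely when $Kw$ has no Galois extension of degree $p$, i.e. (since $\zeta_p$ is automatic here) when the additive group $Kw/\wp(Kw)$ is trivial, where $\wp(x) = x^p - x$. The proposed criterion on the right-hand side is exactly the assertion that \emph{no} proper coarsening $w$ of $v$ has $Kw = Kw(p)$: if $w$ corresponds to the convex subgroup $\Delta \leq vK$ and $\bar v$ is the induced valuation on $Kw$, then picking $x \in \mathfrak{m}_v \setminus \{0\}$ with $\bar v(x) < 0$ (i.e. $x$ in the maximal ideal of $v$ but a unit for $w$, or more precisely with $v(x)>0$ but $\Delta$-component negative) the set $x^{-1}\mathcal{O}_v$ ranges over elements whose $\bar v$-value is sufficiently negative, and $X^p - X - y$ with $y$ of negative $\bar v$-value produces a degree-$p$ Galois extension of $Kw$. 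So the first half of the argument is: if the displayed condition \emph{fails}, witnessed by some $x$ with $x^{-1}\mathcal{O}_v \subseteq K^{(p)}$, then I would show the corresponding coarsening $w$ of $v$ (take $\Delta$ to be the convex hull of $v(x)$, or rather the largest convex subgroup not meeting a suitable interval) has $Kw = Kw(p)$, so $v$ is a proper refinement of $v_K^p$ and $v \neq v_K^p$.

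For the converse direction --- that the displayed condition forces $v = v_K^p$ --- I would argue contrapositively using Lemma \ref{1} and the defining property of $v_K^p$. Since $(K,v)$ is $p$-henselian with $Kv = Kv(p)$, we have $v \in H_2^p(K)$, so $v$ is a refinement of $v_K^p$; thus $v \neq v_K^p$ means $v$ is a \emph{proper} refinement, and then $v_K^p$ is a proper coarsening $w$ of $v$ with $Kw = Kw(p)$. Conversely, suppose $v_K^p$ is a proper coarsening; I want to produce $x \in \mathfrak m_v \setminus\{0\}$ with $x^{-1}\mathcal{O}_v \subseteq \wp(K)$. The key point here is the contrapositive of Lemma \ref{1}(1): if $vK$ had no non-trivial $p$-divisible convex subgroup then by Lemma \ref{1} \emph{every} proper coarsening $w$ would satisfy $Kw \neq Kw(p)$, contradicting $v_K^p$ being a proper coarsening with $p$-closed residue field; hence $vK$ \emph{does} have a non-trivial $p$-divisible convex subgroup $\Delta$, and the induced valuation $\bar v$ on $Kw$ (for $w$ the coarsening attached to $\Delta$) has $p$-divisible value group $\Delta$ and residue field $Kv = Kv(p)$. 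In characteristic $p$, $p$-henselianity of $\bar v$ (by \cite[Corollary 4.2.7]{EP05}) together with $p$-divisible value group and $p$-closed residue field forces $Kw = Kw(p)$; now, since $\bar v$ is $\wp$-surjective onto $Kw$ and has value group $\Delta \neq \{0\}$, for any $x \in \mathfrak m_{\bar v} \setminus \{0\}$ one checks that every element of $x^{-1}\mathcal O_{\bar v}$ (which as a subset of $Kw$ consists of elements of value $\geq -\bar v(x) > $ some fixed negative bound, hence can be arranged inside $\mathcal O_{\bar v} + (\text{bounded negative part})$) lies in $\wp(Kw)$, using that $\wp$ is surjective on $Kw$ and that $\wp(\mathcal O_{\bar v}) \supseteq$ enough of $\mathcal O_{\bar v}$ by $p$-henselianity (Hensel's lemma for $X^p - X - a$, $a \in \mathfrak m_{\bar v}$). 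Pulling back along $\mathcal O_v \subseteq \mathcal O_w$, one gets $x^{-1}\mathcal O_v \subseteq \wp(K)$ as desired.

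I expect the main obstacle to be the bookkeeping connecting the ``one element $x$'' in the displayed formula to an actual convex subgroup $\Delta$ and the induced valuation $\bar v$: a single $x$ only pins down the archimedean class of $v(x)$, so one has to be careful that ``$x^{-1}\mathcal O_v \subseteq \wp(K)$'' really is equivalent to ``the coarsening that makes $x$ a unit has $p$-closed residue field'', and conversely that failure at the level of $v_K^p$ can always be pushed down to a \emph{single} witnessing $x$ rather than merely an infinite descending condition. The cleanest route is probably to recall (as in section 2, via Bröcker's theorem and the comparability of valuations in $H_1^p(K)$) that all the valuations in play here are comparable and to identify $v_K^p$ explicitly as the coarsening of $v$ corresponding to the convex subgroup $\Delta_0 := \bigcup \{\, \Delta \leq vK \text{ convex} : \Delta \text{ is } p\text{-divisible}\,\}$ (which is itself $p$-divisible), and then verify that $x^{-1}\mathcal O_v \subseteq \wp(K)$ holds for some $x \neq 0$ in $\mathfrak m_v$ iff $\Delta_0 \neq \{0\}$ iff $v \neq v_K^p$. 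The surjectivity of $\wp$ on the residue field at each relevant stage, which is what $Kv = Kv(p)$ buys us, together with $p$-henselianity to lift, is the engine that makes all the containments go through; the rest is translating between value-group language and the polynomial $X^p - X - x$.
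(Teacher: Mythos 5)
Your sketch correctly identifies the easy direction ($v\neq v_K^p$ implies a witnessing $x$: take $x\in\mathcal O_{v_K^p}^\times\cap\mathfrak m_v$ and use $\mathcal O_{v_K^p}\subseteq K^{(p)}$, which follows from $p$-henselianity and $p$-closedness of the residue field), although your detour through the induced valuation $\bar v$ on $Kw$ is unnecessary, and the intermediate claim that $p$-divisible value group plus $p$-closed residue field \emph{forces} $Kw = Kw(p)$ is false in characteristic $p$ when the residue field is imperfect (Frobenius need not be surjective) --- it is merely moot here because $w = v_K^p$ has $p$-closed residue field by definition. The genuine gap is in the hard direction, $v = v_K^p \Rightarrow \forall x\colon x^{-1}\mathcal O_v\not\subseteq K^{(p)}$, and you flag it yourself: a single witness $x$ with $x^{-1}\mathcal O_v\subseteq K^{(p)}$ does not hand you a coarsening with $p$-closed residue field, since $x^{-1}\mathcal O_v$ is a fractional ideal and not a ring, and the smallest coarsening $\mathcal O_w$ containing it is strictly larger, so you cannot conclude $\mathcal O_w\subseteq K^{(p)}$.

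Your proposed repair via $\Delta_0 := \bigcup\{\Delta\leq vK \text{ convex}: \Delta \text{ $p$-divisible}\}$ does not close this gap: the asserted equivalence ``$\Delta_0\neq\{0\}$ iff $v\neq v_K^p$'' fails whenever $Kv$ is imperfect, because by Lemma~\ref{1}(2) one then has $v = v_K^p$ regardless of whether $vK$ has non-trivial $p$-divisible convex subgroups, and the equivalence ``$\exists x\colon x^{-1}\mathcal O_v\subseteq\wp(K)$ iff $\Delta_0\neq\{0\}$'' is exactly as hard as the lemma itself. The paper's proof deals with this by a maximality argument that has no counterpart in your sketch: it takes a \emph{maximal} fractional ideal $\mathfrak N$ of $\mathcal O_v$ contained in $K^{(p)}$, observes that $\Gamma$ (the convex hull of the values occurring in $\mathfrak N\setminus\mathcal O_v$) is $p$-divisible, deduces from $v = v_K^p$ that $\mathfrak N$ contains no proper coarsening of $\mathcal O_v$ and hence $\Gamma$ is archimedean, and then constructs for $\alpha\in\,]1,2-\frac1p[$ the strictly larger fractional ideal $\mathfrak N_\alpha$ and shows, by the additive decomposition $z = (ab)^p-(ab)+ab-a^pb$ and a value estimate $v(a^pb)\geq v(y)$, that $\mathfrak N_\alpha\subseteq K^{(p)}$, contradicting maximality. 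This computation is the core content of the lemma and is what you would need to supply.
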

\begin{proof}
We may assume $K\neq K(p)$ since otherwise $v_K^p$ is trivial and 
there is nothing to prove.

The implication from left to right is shown in the first part of the
 proof of Theorem 3.1 in \cite{Koe94}. As it is unpublished,
we repeat the proof for the convenience of the reader.
The idea of the proof is as follows: If $v$ is a
$p$-henselian valuation on a field $K$ with $\mathrm{char}(K)=p$
satisfying $Kv=Kv(p)$, then $p$-Hensel's lemma (Proposition
\ref{phenseq}) implies 
$\mathcal{O}_v \subseteq K^{(p)}$. We show that if
$Kv_K^p=Kv_K^p(p)$, then no
proper coarsening
of $v_K^p$ can have this property. This is exactly what the
statement
$$v = v_K^p \implies \forall x \in \mathfrak{m}_v \setminus \{0\}: 
\;
x^{-1}\mathcal{O}_v \not\subseteq K^{(p)} = \Set{ x^p-x | x \in K}$$
expresses.

Take $x \in \mathfrak{m}_v \setminus \{0\}$
satisfying $x^{-1} \mathcal{O}_v \subseteq K^{(p)}$. Then, there is a 
maximal fractional ideal $\mathfrak{N}$ of $\mathcal{O}_v$
such that $\mathcal{O}_v \subsetneq  \mathfrak{N} \subseteq K^{(p)}$
holds.
Consider the convex hull $\Gamma$ of the group generated by 
$$\Set{v(a) | a \in \mathfrak{N}\setminus \mathcal{O}_v}$$ in $vK$.
Then, $\Gamma$ is a non-trivial $p$-divisible subgroup of $vK$. Assume now
that also $v=v_K^p$ holds. We now prove that this contradicts the maximality
of $\mathfrak{N}$.

As all coarsenings $v'$ of $v$ satisfy $Kv' \neq Kv'(p)$,
$\mathfrak{N}$ contains no proper coarsenings of $\mathcal{O}_v$.
Hence $\Gamma$ is archimedean. Note that for any $\alpha \in \mathbb{R}$ with
$\alpha > 1$, the fractional $\mathcal{O}_v$-ideal
$$ \mathfrak{N}_\alpha \coloneqq \Set{ z \in K \mid 
v(z) \geq \alpha \cdot v(y) \textrm{ for some }y \in \mathfrak{N}}$$
strictly contains $\mathfrak{N}$. To get a contradiction, we now choose any
$\alpha \in \; ]1,2-\frac{1}{p}[$ and show that $\mathfrak{N}_\alpha$
is contained in $K^{(p)}$. 
Choose any $z \in \mathfrak{N}_\alpha \setminus
\mathfrak{N}$, say $v(z) \geq \alpha \cdot v(y)$ for some 
$y \in  \mathfrak{N}$. In particular, $v(z)<0$.
Then, as $1 < \alpha < 2$, we have
$0>v(zy^{-1})>v(y)$ and thus $zy^{-1} \in  \mathfrak{N} \setminus \mathcal{O}_v$.
Hence, there is some $a \in K \setminus \mathcal{O}_v$ with
$zy^{-1} \in a^p\mathcal{O}_v^\times$ and thus $za^{-p} \in \mathfrak{N}$.
Moreover, there is some $b \in K \setminus \mathcal{O}_v$ with 
$za^{-p} = b^p-b$. Now, we have
$z=a^pb^p-a^pb$, but 
\begin{align*}
v(a^pb) &= v(zy^{-1})+v(b)\\
&=v(z)-v(y)+\dfrac{1}{p}v(y)\\
&\geq (\alpha-1+\dfrac{1}{p})\cdot v(y)\\
&\geq v(y)
\end{align*}
so $v(ab) > v(a^pb) \geq v(y)$ holds. 
This implies $ab,a^pb\in \mathfrak{N} \subseteq K^{(p)}$.
As $K^{(p)}$ is closed under addition, we conclude 
$z=(ab)^p-(ab)+ab-a^pb \in K^{(p)}$, contradicting
the maximality of $\mathfrak{N}$.

Suppose now that we have $v \neq v_K^p$. Then, by the definition of $v_K^p$,
we get
$\mathcal{O}_v \subsetneq\mathcal{O}_{v_K^p}$ and $Kv_K^p = Kv_K^p(p)$.
By Proposition \ref{phenseq}, the polynomial $X^p-X-a$ has a zero for any 
choice of
$a \in \mathcal{O}_{v_K^p}$, thus $\mathcal{O}_{v_K^p} \subseteq K^{(p)}$
holds. 
Now for any $x \in \mathcal{O}_{v_K^p}^\times \cap \mathfrak{m}_v$, 
we have
$$x^{-1}  \mathcal{O}_{v} \subseteq  x^{-1} \mathcal{O}_{v_K^p}
=  \mathcal{O}_{v_K^p} \subseteq K^{(p)},$$
so the condition on the right does not hold.
\end{proof}

\begin{Lem}[Lemma 3.2 in \cite{Koe03}] \label{Fact}
Let $(K,v)$ be $p$-henselian of characteristic $(0,p)$ with $\zeta_p \in K$.
Then for any $a \in \mathcal{O}_v$ we have
$$1+(1-\zeta_p)^pa \in (K^\times)^p \Longleftrightarrow \exists x \in Kv: \;
x^p-x-\bar{a}=0.$$
\end{Lem}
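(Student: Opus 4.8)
The plan is to exhibit the Artin--Schreier polynomial $Y^p - Y - \bar a$ as the reduction of a Kummer-type polynomial over $\mathcal{O}_v$, obtained from $X^p - 1 - (1-\zeta_p)^p a$ via the substitution $X = 1 + (1-\zeta_p)Y$, and then to pass between roots ``upstairs'' in $K$ and ``downstairs'' in $Kv$ by means of $p$-Hensel's lemma in the form of Proposition~\ref{phenseq}(4).

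First I would record the elementary valuation-theoretic facts about mixed characteristic $(0,p)$ with $\zeta_p \in K$. Writing $\delta = 1 - \zeta_p$, from $p = \prod_{i=1}^{p-1}(1 - \zeta_p^i)$, from $v(p) > 0$ (as $\mathrm{char}(Kv) = p$), and from the observation that each $\tfrac{1 - \zeta_p^i}{1 - \zeta_p} = 1 + \zeta_p + \cdots + \zeta_p^{i-1}$ is a unit of $\mathcal{O}_v$ congruent to $i$ modulo $\mathfrak{m}_v$ (since $\zeta_p \equiv 1$), one gets $v(\delta) = \tfrac{1}{p-1}v(p) > 0$ and $u := p\,\delta^{-(p-1)} \in \mathcal{O}_v^\times$ with residue $\bar u = (p-1)! = -1$ by Wilson's theorem. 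Carrying out the substitution and dividing by $\delta^p$ yields
$$g(Y) = \frac{(1+\delta Y)^p - 1 - \delta^p a}{\delta^p} = Y^p + \sum_{k=2}^{p-1}\binom{p}{k}\delta^{k-p}Y^k + u\,Y - a .$$
Since $p \mid \binom{p}{k}$ for $1 \le k \le p-1$ and $v(p) = (p-1)v(\delta)$, the coefficient $\binom{p}{k}\delta^{k-p}$ has valuation at least $(k-1)v(\delta)$, hence lies in $\mathfrak{m}_v$ for $k \ge 2$. Thus $g \in \mathcal{O}_v[Y]$ and $\bar g(Y) = Y^p + \bar u\,Y - \bar a = Y^p - Y - \bar a$.

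Next I would connect the three conditions. Because $v(1 + \delta^p a) = 0$ (as $v(\delta^p a) \ge p\,v(\delta) > 0$), the element $1 + \delta^p a$ is nonzero and lies in $(K^\times)^p$ exactly when $X^p - 1 - \delta^p a$ has a root in $K$; moreover $\beta \mapsto (\beta - 1)/\delta$ is a bijection between the roots of $X^p - 1 - \delta^p a$ in $K$ and the roots of $g$ in $K$. A root $\beta$ of $X^p - 1 - \delta^p a$ generates over $K$ either $K$ itself or, since $\zeta_p \in K$, a degree-$p$ Kummer (hence Galois) extension, so in either case $g$ splits in $K(p)$. Finally, every root of $g$ in $K$ already lies in $\mathcal{O}_v$: if $v(\alpha) < 0$, then $v(\alpha^p) = p\,v(\alpha)$ is strictly smaller than the valuation of every other summand of $g(\alpha)$, which is incompatible with $g(\alpha) = 0$.

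Assembling these pieces is routine. One has $\bar g'(Y) = pY^{p-1} - 1 = -1 \ne 0$, so every root of $\bar g$ in $Kv$ is simple; since $g$ splits in $K(p)$, Proposition~\ref{phenseq}(4) lifts any such root to a root of $g$ in $\mathcal{O}_v$. Conversely, a root of $g$ in $K$ lies in $\mathcal{O}_v$ and reduces to a root of $\bar g$ in $Kv$. Hence $\exists x \in Kv\,(x^p - x - \bar a = 0)$ iff $g$ has a root in $K$ iff $X^p - 1 - \delta^p a$ has a root in $K$ iff $1 + (1-\zeta_p)^p a \in (K^\times)^p$, which is the claim. The only genuinely computational step is the substitution, and within it the identification of the residue of the linear coefficient as $-1$ via Wilson's theorem; once $g$ is in hand, the rest is a direct application of $p$-Hensel's lemma.
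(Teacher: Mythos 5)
The paper cites this as Lemma~3.2 of \cite{Koe03} and gives no proof of its own, so there is no in-paper argument to compare against. Your proof is correct, and the route you take --- substituting $X = 1 + (1-\zeta_p)Y$ into the Kummer polynomial $X^p - (1+(1-\zeta_p)^p a)$, dividing by $(1-\zeta_p)^p$, checking via $p \mid \binom{p}{k}$ and $v(p)=(p-1)v(1-\zeta_p)$ that the intermediate coefficients land in $\mathfrak{m}_v$, identifying the residue of the linear coefficient as $(p-1)! \equiv -1$ by Wilson, and then passing roots up and down with Proposition~\ref{phenseq}(4) after noting that the Kummer polynomial always splits in a cyclic degree-$p$ extension so lies inside $K(p)$ --- is the standard way of bridging Kummer and Artin--Schreier extensions in mixed characteristic $(0,p)$, and is almost certainly what the cited source does. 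All the checks are in place: the observation that $v(1+(1-\zeta_p)^p a)=0$ so membership in $(K^\times)^p$ is equivalent to $X^p - (1+(1-\zeta_p)^p a)$ having a $K$-root; the verification that any $K$-root of $g$ is already integral by the ultrametric estimate; and the simplicity of the residue roots from $\bar g' = -1$. The argument is complete.
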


We now consider the case $(\mathrm{char}(K),\mathrm{char}(Kv)=(0,p)$.
The proof is similar to the one of Lemma \ref{2}, it is
however even more technical. 
\begin{Lem} \label{3}
Let $(K,v)$ be a $p$-henselian valued field with $\mathrm{char}(K)=0$ and 
$\mathrm{char}(Kv)=p$.
Assume further that $Kv$ is perfect, $vK$ has a non-trivial $p$-divisible
convex subgroup and that $Kv= Kv(p)$ holds. Then
\begin{align*} v = 
v_K^p 
&\Longleftrightarrow \forall x \in \mathfrak{m}_v \setminus \{0\}: 
\; 1+x^{-1}(\zeta_p-1)^p\mathcal{O}_v \not\subseteq (K^\times)^p. 
\end{align*}
\end{Lem}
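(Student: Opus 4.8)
My plan is to imitate the proof of Lemma~\ref{2}, replacing the additive group $K^{(p)}$ by the multiplicative group $(K^\times)^p$ and replacing the tautology ``$\mathcal O_v\subseteq K^{(p)}$'' (valid in equal characteristic $p$) by Lemma~\ref{Fact}. Throughout write $\pi:=(\zeta_p-1)^p$. Since $(1-\zeta_p)^p=(-1)^p(\zeta_p-1)^p$ and $\mathcal O_v=-\mathcal O_v$, the sets $1+(1-\zeta_p)^p\mathcal O_v$ and $1+\pi\mathcal O_v$ coincide, so Lemma~\ref{Fact} together with $Kv=Kv(p)$ (which, in residue characteristic $p$, forces the Artin--Schreier map $x\mapsto x^p-x$ on $Kv$ to be surjective) yields $1+\pi\mathcal O_v\subseteq(K^\times)^p$; the same argument applies verbatim to every $p$-henselian coarsening of $v$ whose residue field is $p$-closed of characteristic $p$.

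For the implication $v\neq v_K^p\Rightarrow$ (the right-hand side fails): as $v\in H_2^p(K)$, $v_K^p$ is a proper coarsening of $v$ with $Kv_K^p=Kv_K^p(p)$, and there is $x\in\mathfrak m_v\setminus\mathfrak m_{v_K^p}$, i.e.\ $v(x)>0=v_K^p(x)$. Because $\mathrm{char}(Kv)=p$, the only possibilities are $\mathrm{char}(Kv_K^p)=p$ or $0$. In the first case $v_K^p(\pi)>0$, so for every $o\in\mathcal O_v\subseteq\mathcal O_{v_K^p}$ we have $x^{-1}\pi o=\pi\cdot(x^{-1}o)\in\pi\mathcal O_{v_K^p}$, whence $1+x^{-1}\pi\mathcal O_v\subseteq 1+\pi\mathcal O_{v_K^p}\subseteq(K^\times)^p$ by the first paragraph. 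In the second case $\prod_{i=1}^{p-1}(1-\zeta_p^i)=p$ gives $v_K^p(p)=v_K^p(\zeta_p-1)=0$, so $Kv_K^p$ is $p$-closed of characteristic $0$ containing $\zeta_p$, hence $((Kv_K^p)^\times)^p=(Kv_K^p)^\times$; one then produces the desired $x$ by descending to the valuation $v'$ induced by $v$ on $Kv_K^p$, choosing $x'\in\mathfrak m_{v'}$ with $v'(x')<v'(\pi)$ so that $1+x'^{-1}\pi\mathcal O_{v'}$ avoids $0$, lifting $x'$ to some $x\in\mathcal O_{v_K^p}$ with $v_K^p(x)=0$, and lifting $p$-th powers from $Kv_K^p$ back to $K$ via $p$-Hensel (Proposition~\ref{phenseq}\,(4)), using $\mathrm{char}(Kv_K^p)=0$. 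The degenerate case $K=K(p)$, where $v_K^p$ is trivial, is immediate: if $v$ is trivial the right-hand side holds vacuously, and otherwise one takes any $x$ with $0<v(x)<v(\pi)$.

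For the converse, assume $v=v_K^p$ and suppose towards a contradiction that $1+x_0^{-1}\pi\mathcal O_v\subseteq(K^\times)^p$ for some $x_0\in\mathfrak m_v\setminus\{0\}$. It is convenient to normalise by $\mathfrak M:=\pi^{-1}\mathfrak N$, which turns the problem into that of an $\mathcal O_v$-submodule $\mathfrak M\subseteq K$ with $\mathcal O_v\subseteq\mathfrak M$ and $1+\pi\mathfrak M\subseteq(K^\times)^p$; by Zorn's Lemma (using that the conditions are preserved under unions of chains) we may take $\mathfrak M$ maximal with these properties, and then $\mathfrak M\supseteq x_0^{-1}\mathcal O_v\supsetneq\mathcal O_v$ while $\mathfrak M\neq K$ (as $-1\notin\mathfrak M$). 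Let $\Gamma\leq vK$ be the convex hull of the subgroup generated by $\{v(a):a\in\mathfrak M\setminus\mathcal O_v\}$. Analysing $1+\pi a$ for $a\in\mathfrak M\setminus\mathcal O_v$ through the leading terms of $(1+e)^p=1+pe+\dots+e^p$ --- and using that $Kv$ is perfect to guarantee that the ``leading coefficient'' one must take a $p$-th root of lies in $Kv$, and that $v(\pi)=p\cdot v(\zeta_p-1)\in p\,vK$ --- one shows, as in Lemma~\ref{2}, that every value occurring in a suitable interval lies in $p\,vK$, and hence that $\Gamma$ is a \emph{non-trivial $p$-divisible} convex subgroup of $vK$. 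Since $v=v_K^p$ is the coarsest valuation in $H_2^p(K)$, every proper coarsening of $v$ has non-$p$-closed residue field; by the first paragraph this forbids $\mathfrak M$ from containing the valuation ring of a proper coarsening of $v$, and therefore $\Gamma$ is archimedean.

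The remaining step --- the one I expect to be the main obstacle --- is the enlargement. For a real number $\alpha\in\ ]1,2-\tfrac1p[$ one forms $\mathfrak M_\alpha:=\{z\in K:v(z)\geq\alpha\,v(y)\text{ for some }y\in\mathfrak M\text{ with }v(y)<0\}$; archimedeanness and $p$-divisibility of $\Gamma$ make $\mathfrak M_\alpha$ an $\mathcal O_v$-module strictly containing $\mathfrak M$, and one must prove $1+\pi\mathfrak M_\alpha\subseteq(K^\times)^p$, contradicting maximality. Where Lemma~\ref{2} used that $K^{(p)}$ is closed under addition, here one works with the multiplicative identity $(1+c)(1+d)=1+(c+d+cd)$ for $c,d\in\pi\mathfrak M$: given $z\in\mathfrak M_\alpha\setminus\mathfrak M$ one uses $p$-divisibility of $\Gamma$ to write $z=y\,a^p u$ with $a\notin\mathcal O_v$, $u$ a $v$-unit and $za^{-p}\in\mathfrak M$, so that $1+\pi za^{-p}\in(K^\times)^p$, and then assembles $1+\pi z$ from this, from $p$-th powers, and from elements of $1+\pi\mathfrak M$. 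The delicate point is that the cross term $cd$ carries an extra $v(\pi)$ worth of value relative to the equal-characteristic computation, which is exactly what dictates the bound $\alpha<2-\tfrac1p$ and makes the valuation bookkeeping considerably more technical (as the statement forewarns). Once a contradiction is reached we conclude $1+x^{-1}\pi\mathcal O_v\not\subseteq(K^\times)^p$ for all $x\in\mathfrak m_v\setminus\{0\}$, which completes the proof.
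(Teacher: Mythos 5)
Your overall strategy matches the paper's: use Lemma~\ref{Fact} as the mixed--characteristic analogue of ``$\mathcal O_v\subseteq K^{(p)}$'', build a maximal fractional ideal/module on which $1+\pi(\,\cdot\,)\subseteq(K^\times)^p$, show the associated convex subgroup $\Gamma$ is non-trivial, $p$-divisible, and archimedean, and then contradict maximality by enlarging along a real $\alpha\in\ ]1,2-\tfrac1p[$. The right-to-left direction (your second paragraph) is organised a bit differently from the paper --- the paper fixes a proper coarsening $w$ of $v$ and uses $p\in\mathfrak m_v$ together with $p$-henselianity of $w$ when all coarsenings have residue characteristic $0$, while you split on $\mathrm{char}(Kv_K^p)$ and lift $p$-th powers from $Kv_K^p$ via $p$-Hensel --- but both are sound, and yours is arguably a little cleaner.

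However, there are genuine gaps where the real work lies.

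First, the archimedeanness of $\Gamma$: you justify ``$\mathfrak M$ contains no proper coarsening $\mathcal O_w$'' by pointing to your first paragraph, but that paragraph only covers coarsenings $w$ with $\mathrm{char}(Kw)=p$. If $\mathrm{char}(Kw)=0$ then $\pi\in\mathcal O_w^\times$ and one gets $1+\mathcal O_w\subseteq(K^\times)^p$; turning this into $Kw=Kw(p)$ needs an extra argument (roughly: lift any residue to a $w$-unit, observe it is a $p$-th power in $K$ of a $w$-unit, and descend --- the paper packages this inside the auxiliary claim at line~(\ref{try1}), after first reducing to the sub-case where all proper coarsenings of $v_K^p$ have residue characteristic $0$ by invoking Lemma~\ref{2}). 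Without this reduction or an explicit argument, the archimedean step is not established. Incidentally, the module $\mathfrak M$ misses $-\pi^{-1}$, not $-1$; the conclusion $\mathfrak M\neq K$ is of course still fine.

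Second, and more seriously, you describe the enlargement step --- producing $z=y a^p u$, using $1+\pi za^{-p}\in(K^\times)^p$, and ``assembling'' $1+\pi z$ --- but do not carry it out, and you yourself flag it as ``the main obstacle.'' In the paper this is exactly where the content is: one expands $1+za^{-p}(\zeta_p-1)^p=(1+b)^p$, derives $1+z(\zeta_p-1)^p=1+(ab)^p+pa^pb^{p-1}+\dots+pa^pb$, and then proves the non-trivial Claim that $pa^pb\in(\zeta_p-1)^p\mathfrak N$ by a case split on $v(b)$ against $v(\zeta_p-1)$ combined with the inequality chain driven by $\alpha\leq 2-\tfrac1p$. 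Your sketch identifies the phenomenon (the cross-term carries an extra $v(\pi)$ of value) but not the estimate that makes the argument close, so as it stands the contradiction with maximality of $\mathfrak M$ is not obtained. Also note the paper needs $\alpha$ additionally chosen so that $\mathfrak N_\alpha\subseteq(\zeta_p-1)^{-p}\mathfrak m_v$ (so that $(ab)^p\in\mathfrak m_v$, allowing $1+ab$ to be a unit), a constraint your normalisation $\mathfrak M=\pi^{-1}\mathfrak N$ does not make disappear.
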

\begin{proof}
We may assume $K\neq K(p)$ since otherwise $v_K^p$ is trivial and 
there is nothing to prove.

The idea of the proof is very similar to the one of
Lemma \ref{2}: We want to show that 
$v_K^p$ is characterized by being the coarsest $p$-henselian valuation
$v$ which satisfies 
$ 1+(\zeta_p-1)^p\mathcal{O}_v \subseteq (K^\times)^p.$ The fact that
this condition holds for $v=v_K^p$ under the assumptions of the lemma
follows from Lemma \ref{Fact}. To see that no proper coarsening of $v_K^p$
satisfies this, we again construct a maximal fractional ideal 
$\mathfrak{N}$ of $\mathcal{O}_{v_K^p}$ with 
$1+(\zeta_p-1)^p  \mathfrak{N} \subseteq (K^\times)^{p}$. We then
show that $\mathfrak{N}$ contains no 
proper coarsenings of $\mathcal{O}_{v_K^p}$
and use this to contradict the maximality of $\mathfrak{N}$.

We show the direction from left to right first. 
Let $v$ be a $p$-henselian valuation on $K$ as in the assumption of
the Lemma.
Assume
that some proper coarsening $w$ of $v$ has residue characteristic $p$.
We now get
\begin{align*} v = v_K^p &\Longrightarrow 
\bar{v}=v_{Kw}^p \\
& \overset{\ref{2}}{\Longrightarrow} 
\forall y \in \mathfrak{m}_{\bar{v}} \setminus\{0\}: \; 
y^{-1}\mathcal{O}_{\bar{v}}\not\subseteq Kw^{(p)}\\
&  \overset{\ref{Fact}}{\Longrightarrow} \forall y \in \mathfrak{m}_{v}\setminus\{0\}: \; 
1 + y^{-1}(1-\zeta_p)^p\mathcal{O}_{v} = 
1 + y^{-1}(\zeta_p-1)^p\mathcal{O}_{v}\not\subseteq (K^\times)^{p},
\end{align*}
hence the implication from left to right 
as stated in the Lemma holds in this case.

Thus, for the remainder of the proof, we
may assume that $v_K^p$
has only coarsenings of residue characteristic $0$. We claim 
that if $v$ satisfies the assumption of the Lemma and we have
\begin{align}
1+ \mathfrak{m}_v \subseteq (K^{\times})^p \label{try1}
\end{align}
then we get $v \neq v_K^p$.
Consider the $p$-henselian coarsening $w$ of $v$
which is obtained by dividing out the maximal
convex $p$-divisible subgroup of $vK$ which is non-trivial by assumption. 
If $\mathrm{char}(Kw)\neq 0$, then we have $v\neq v_K^p$
by our assumption that all proper coarsenings
of $v_K^p$ have residue characteristic $0$. 
Hence, we may assume 
$\mathrm{char}(Kw)=0$.
Then, the valuation $\bar{v}$ induced
by $v$ on $Kw$ is $p$-henselian, has $p$-divisible value group and 
perfect residue field. Note that we also have 
$1+\mathfrak{m}_{\bar{v}} \subseteq (Kw^\times)^p$.
Take any $a \in Kw$. Then, there is some $b \in Kw$ with 
$\bar{v}(a)=\bar{v}(b^p)$, so we have 
$ab^{-p} \in \mathcal{O}_{\bar{v}}^\times$. As $Kv = (Kw)\bar{v}$ 
is perfect, there is some
$c \in Kw$ with $ab^{-p} \in c^p (1+\mathfrak{m}_{\bar{v}})$, hence we get
$a \in Kw^p$. Thus, $Kw$ is $p$-closed, implying $v\neq v_K^p$.  

Now, assume for a contradiction that for $v=v_K^p$, 
there is some
 $x \in \mathfrak{m}_v \setminus \{0\}$
such that 
\begin{align}
1+x^{-1}(\zeta_p-1)^p \mathcal{O}_v \subseteq (K^{\times})^p \label{basic} 
\end{align}
holds. 

Note that we have $1+y^{-1}(\zeta_p-1)^p \mathcal{O}_v \subseteq (K^{\times})^p$
for any $y \in \mathfrak{m}_v$ with $v(y) \leq v(x)$. Thus,
there is a 
maximal fractional ideal $\mathfrak{N}$ of $\mathcal{O}_v$
such that $$1+(\zeta_p-1)^p  \mathfrak{N} \subseteq (K^\times)^{p}$$ holds:
As $0 \notin \left(K^\times\right)^p$, we have 
$v(y) > v(\zeta_p-1)^p$ for all $y \in \mathfrak{N}$ with 
$y^{-1} \in \mathfrak{m}_v$. 
Hence, we have $$\mathfrak{N} \subseteq (\zeta_p-1)^{-p} 
\mathfrak{m}_v \subseteq  (\zeta_p-1)^{-p} 
\mathcal{O}_v. $$
By (\ref{try1}), we get $1+\mathfrak{m}_v \not\subseteq (K^\times)^p$
and hence
$$\mathfrak{N} \subsetneq (\zeta_p-1)^{-p} 
\mathfrak{m}_v.$$
Let $\Gamma$ be the convex hull of the group generated by 
$$\Set{v(a) | a \in \mathfrak{N}\setminus \mathcal{O}_v}$$ in $vK$.
Then $\Gamma$ is a non-trivial $p$-divisible subgroup of $vK$: 
For any $a \in \mathfrak{N} \setminus \mathfrak{m}_v$, we have
\begin{align*} 1+a (\zeta_p-1)^p &= (1+b)^p \textrm{ for some }b \in 
\mathfrak{m}_v\\
&=1+pb + \dotsc + pb^{p-1}+b^p.
\end{align*}
Thus, we get $v(b)<v(\zeta_p-1)$ and so $v(a(\zeta_p-1)^p)=v(b^p)$ holds.
Hence, we conclude that $\Gamma$ is $p$-divisible. 

Note that $\mathfrak{N}$ contains no proper coarsening $\mathcal{O}_w$ 
of $\mathcal{O}_v$:
Otherwise, we have $\mathfrak{m}_v \subseteq \mathcal{O}_w$ and thus
$$1 + \mathfrak{m}_v \subseteq 1+\mathcal{O}_w \subseteq (K^\times)^p$$
holds. We have already shown that this contradicts $v=v_K^p$, cf.~(\ref{try1}).

Hence, $\Gamma$ is archimedean. For any $\alpha \in \mathbb{R}$ with
$\alpha > 1$, the fractional $\mathcal{O}_v$-ideal
$$ \mathfrak{N}_\alpha \coloneqq \Set{ z \in K \mid 
v(z) \geq \alpha \cdot v(y) \textrm{ for some }y \in \mathfrak{N}}$$
strictly contains $\mathfrak{N}$. 
To get a contradiction, we now choose any 
$\alpha \in \; ]1,2-\frac{1}{p}[$ such that 
$\mathfrak{N}_\alpha \subseteq (\zeta_p-1)^{-p} \mathfrak{m}_v$ holds.
Pick $z \in \mathfrak{N}_\alpha \setminus
\mathfrak{N}$, say $v(z) \geq \alpha \cdot v(y)$ for some 
$y \in  \mathfrak{N}$. Then, as $1 < \alpha < 2$, we have
$0>v(zy^{-1})>v(y)$ and thus $zy^{-1} \in  \mathfrak{N} \setminus 
\mathcal{O}_v$.
Hence, there is some $a\in K \setminus \mathcal{O}_v$
 with
$$ zy^{-1} \in a^p\mathcal{O}_v^\times.$$
Moreover, as we have $za^{-p}\in \mathfrak{N}$, there
is some $b \in \mathfrak{m}_v$ with
\begin{align} 
1+za^{-p}(\zeta_p-1)^p = (1+b)^p. \label{star2}
\end{align}
Thus, we get
$$z(\zeta_p-1)^p = a^p(b^p +pb^{p-1} + \dotsc +pb)$$
and 
\begin{align}1+z(\zeta_p-1)^p = 1 + (ab)^p + p a^pb^{p-1} + \dots + pa^pb.
\label{Nummer}
\end{align}
\emph{Claim: } We have $pa^pb \in (\zeta_p-1)^p \mathfrak{N}$. \\
\emph{Proof of Claim:} In order to show the claim,
we may equivalently show $a^pb \in 
(\zeta_p-1) \mathfrak{N}$.
Note that we have $a^p \in (zy^{-1})\mathcal{O}_v^\times
\subseteq \mathfrak{N}$. Hence, in case $v(b) \geq v(\zeta_p-1)$, we get
immediately $a^pb\in (\zeta_p-1)\mathfrak{N}$.
If $v(b)<v(\zeta_p-1)$, then $v(b^p) < v(pb)$ and so we get by the equation
(\ref{star2}) that 
\begin{align} v(za^{-p}(\zeta_p-1)^p)=v(b^p) \label{noch}
\end{align}
holds. 
Note that 
\begin{align*}
v(a^pb(\zeta_p-1)^{-1})
 &= v(a^{p-1}a b(\zeta_p-1)^{-1})\\ 
&\overset{(\ref{noch})}{=} 
v(a^{p-1}a 
\sqrt[p]{z}(a^{-1})(\zeta_p-1)(\zeta_p-1)^{-1})\\
&= v(a^{p-1}\sqrt[p]{z})
\end{align*}
holds, so $a^{p-1}\sqrt[p]{z} \in \mathfrak{N}$ implies 
$a^pb \in (\zeta_p-1) \mathfrak{N}$.
As we have $y \in \mathfrak{N}$,
it now suffices to show that
$$v(a^{p-1}\sqrt[p]{z}) \geq v(y)$$
holds.
Since $\alpha \leq 2-\frac{1}{p}$, we calculate
\begin{align*} 
& v(z) \geq \alpha v(y) \geq (2-\frac{1}{p})v(y)\\
\Longrightarrow\; & v(z^p) \geq v(y^{2p-1})\\
\Longrightarrow\; & v((zy^{-1})^{p-1}z) \geq v(y^{p})\\
\Longrightarrow\; & v((a^p)^{p-1}z) \geq v(y^{p})\\
\Longrightarrow\; & v(a^{p-1}\sqrt[p]{z}) \geq v(y).
\end{align*}
This proves the claim.

Now, note that by the choice of $\alpha$, we have 
$z(\zeta_p-1)^p \in \mathfrak{m}_v$
and so, by equation (\ref{star2}), $(ab)^p \in \mathfrak{m}_v$ and thus
$ab \in \mathfrak{m}_v$. Hence, using the claim above, we get
\begin{align*}
1+z(\zeta_p-1)^p &= (1 + ab)^p -pab - \dots -p(ab)^{p-1}+pa^pb^{p-1}
+ \dots + pa^pb\\
&\in (1 +ab)^p +(\zeta_p-1)^p \mathfrak{N} \\
&\subseteq (1 +ab)^p (1+(\zeta_p-1)^p \mathfrak{N}) \subseteq (K^\times)^p
\end{align*}
This implies $$1+(\zeta_p-1)^p\mathfrak{N}_\alpha \subseteq (K^\times)^p,$$
contradicting the maximality of $\mathfrak{N}$.
Thus, we conclude
\begin{align*}  v=v_K^p
\Longrightarrow
\forall x \in \mathfrak{m}_v \setminus \{0\}: 
\; 1+x^{-1}(\zeta_p-1)^p\mathcal{O}_v \not\subseteq (K^\times)^p.
\end{align*}

Conversely, assume that
\begin{align} \forall x \in \mathfrak{m}_v \setminus \{0\}: 
\; 1+x^{-1}(\zeta_p-1)^p\mathcal{O}_v \not\subseteq (K^\times)^p \label{assu}
\end{align}
holds.

We first assume that all proper
coarsenings of $v$ have residue characteristic $0$. 
Let $w$ be a proper coarsening of $v$.
As $p \in \mathfrak{m}_v$, assumption (\ref{assu})
ensures that there is some $a \in \mathcal{O}_v$ such that 
we have
$$1 + \dfrac{1}{p}(\zeta_p-1)^pa \notin (K^\times)^p.$$
Since $\mathrm{char}(Kw)=0$,
we have $\mathcal{O}_v[\frac{1}{p}] \subseteq \mathcal{O}_w$
and thus $\frac{1}{p}(\zeta_p-1)^pa \in \mathcal{O}_w$.
However,
 $p$-henselianity (see Proposition \ref{phenseq}) implies
$$1+ \mathfrak{m}_w \subseteq (K^\times)^p.$$
In particular, we get $\frac{1}{p}(\zeta_p-1)^pa \in \mathcal{O}_w^\times$
and thus $Kw\neq Kw(p)$. Therefore, by the definition of the canonical
$p$-henselian valuation, we have $v = v_K^p$.

Otherwise, $v$ has a proper coarsening $w$ of residue characteristic $p$. 
Let $\bar{v}$ denote the valuation induced by $v$ on $Kw$. Then,
\begin{align*}
v = v_K^p &\Longleftrightarrow 
\bar{v}=v_{Kw}^p \\
& \overset{\ref{2}}{\Longleftrightarrow} 
\forall x \in \mathfrak{m}_{\bar{v}}\setminus\{0\}: \; 
x^{-1}\mathcal{O}_{\bar{v}}\not\subseteq Kw^{(p)}\\
&  \overset{\ref{Fact}}{\Longleftrightarrow} \forall x \in \mathfrak{m}_{v}\setminus\{0\}: \; 
1 + x^{-1}(1-\zeta_p)^p\mathcal{O}_{v} = 
1 + x^{-1}(\zeta_p-1)^p\mathcal{O}_{v}\not\subseteq (K^\times)^{p}
\end{align*}
holds and thus $v$ is indeed the canonical $p$-henselian valuation.
\end{proof}

Using the lemmas above, we can now prove the theorem:
\begin{proof}[Proof of the theorem]
We show first that $v_K^p$ is uniformly $\emptyset$-definable as such 
in case $p\neq 2$ or $Kv_K^p$ is not Euclidean.

As before, it suffices to characterize $v = v_K^p$
by a parameter-free first-order sentence in the language 
$\mathcal{L}_{val}:=\mathcal{L}_\textrm{ring} 
\cup \{\mathcal{O}_v\}$. Then by 
Beth's Definability Theorem 
(Theorem 5.5.4 in \cite{Hod97}), there is a
parameter-free
$\mathcal{L}_\textrm{ring}$-sentence $\phi_p(x)$ which defines $v_K^p$ as such.
Note that $Kv$ as an $\mathcal{L}_{ring}$-structure (respectively $vK$
as an $\mathcal{L}_{oag}$-structure with $\mathcal{L}_{oag}=\{+,<;0\}$)
is $\emptyset$-interpretable in the $\mathcal{L}_{val}$-structure $(K,v)$.
Thus, we may use $\mathcal{L}_{ring}$-elementary properties
of $Kv$ and $\mathcal{L}_{oag}$-properties of $vK$ in our
description of $v_K^p$.
The characterization is done by the following sentence $\psi_p$:
\begin{enumerate}[(i)] 
\item If $K=K(p)$, then $\mathcal{O}_v = K$
\item If $K \neq K(p)$, then
\begin{enumerate}[(1)]
\item $\mathcal{O}_v$ is a valuation ring of $K$
\item $v$ is $p$-henselian (see Theorem 1.5 in \cite{Koe95}): 
\begin{itemize}
\item if $\mathrm{char}(K)=p$, $$\mathfrak{m}_v \subseteq K^{(p)}:= 
\Set{x^p-x | x \in K}$$
\item if $\mathrm{char}(K)\neq p$, $$1 + p^2 \mathfrak{m}_v 
\subseteq (K^\times)^p \textrm{ and } \mathfrak{m}_v 
\subseteq \{x^p - x \mid x \in K\} + p\mathfrak{m}_v $$
\end{itemize}
\item if $Kv \neq Kv(p)$, then $Kv$ is not $p$-henselian (this is an 
$\mathcal{L}_{ring}$-elementary
property of $Kv$ by Corollary 2.2 in \cite{Koe95}) and, in case $p=2$,
not Euclidean 
\item if $Kv=Kv(p)$, then
\begin{itemize}
\item either $vK$ has no non-trivial $p$-divisible convex subgroup, this is
axiomatized by the $\mathcal{L}_{oag}$-sentence
$$ \forall \alpha \,( \alpha > 0 \rightarrow \exists \gamma\forall \delta \, 
(0 < \gamma \leq \alpha\, \wedge \, \gamma \neq \underbrace
{\delta+\dots +\delta}_{p\textrm{-times}}))$$
\item or it does and
\begin{itemize}
\item $\mathrm{char}(K)=p$ and $$\forall x \in \mathfrak{m}_v:\;
x^{-1}\mathcal{O}_v \not\subseteq K^{(p)}$$
\item or $(\mathrm{char}(K),\mathrm{char}(Kv))=(0,p)$ and $Kv$ is not perfect
\item or $(\mathrm{char}(K),\mathrm{char}(Kv))=(0,p)$ and $Kv$ is perfect
and  $$\forall x \in \mathfrak{m}_v:\;
1 + x^{-1}(\zeta_p-1)^p\mathcal{O}_v \not\subseteq (K^\times)^{p}.$$
\end{itemize}
\end{itemize}
\end{enumerate}
\end{enumerate}
It follows from Corollary \ref{1.1} and Lemmas \ref{1}, \ref{2} and \ref{3} 
that these conditions 
indeed hold for $v_K^p$ and that they furthermore guarantee $v = v_K^p$.

What is left to show is that if $K$ is a $p$-henselian field, 
with $p=2$ and
$Kv_2$ is Euclidean, then $v_K^p$ is not definable as such.

Consider
an $\omega$-saturated elementary extension 
$$(M,w) \succ (K,v_K^2)$$
in $\mathcal{L}_\textrm{ring} \cup 
\{\mathcal{O}_v\}$. Then the residue field $Mw$ is $\omega$-saturated and 
Euclidean, thus its unique ordering is non-archimedean. Hence
$Mw$ admits a non-trivial $2$-henselian valuation and so
$w \neq v_{M}^2$. In particular, $v_K^2$ is not definable as such.
\end{proof}

\section{Definable $t$-henselian valuations}
We now use our definitions of canonical $p$-henselian valuations to show that
in most cases a henselian valued field admits a definable
valuation which induces the (unique) henselian topology. As this topology
is $\emptyset$-definable in the language of rings, we will argue in the more
general context of $t$-henselian fields, namely fields which are elementarily
equivalent (in $\mathcal L_\textrm{ring}$) to some non-trivially 
henselian valued field. These were first introduced in \cite{PZ78}.

\begin{Def} 
Let $K$ be a field and $\tau$ a filter of neighbourhoods of $0$
on $K$. Then $(K, \tau)$ is called
\emph{$t$-henselian} if the following axioms hold, where $U$ and $V$ range
over elements of $\tau$ and $x,y$ range over elements of $K$:
\begin{enumerate}
\item $\forall U\;\{0\} \subsetneq U, \;\forall  x\neq 0 \,
\exists V\; x \notin V$
\item $\forall U\, \exists V \; V-V \subseteq U$
\item $\forall U\, \exists V \; V\cdot V \subseteq U$
\item $\forall U\,\forall x\, \exists V\; xV\subseteq U$
\item $\forall U\, \exists V\, \forall x,y\; (x \cdot y \in V 
\longrightarrow (x \in U \, \vee \,y \in U))$
\item (for every $n \in \mathbb N$) 
$\exists U\, \forall f \in X^{n+1} + X^n + U[X]^{n-1} \,\exists x \; f(x) = 0$
\end{enumerate}
Here, $U[X]^{m}$ denotes the set of polynomials with coefficients in $U$ and
degree at most $m$.
\end{Def}
 
Note that the first four axioms ensure that 
$\tau$ consists of a basis of neighbourhoods of $0$
of a non-discrete Hausdorff ring topology of $K$. The fifth axiom implies
that the topology is a $V$-topology and -- together with axioms (1)--(4) --
that it is in fact a field topology.
The final axiom scheme can be seen as a 
non-uniform version of henselianity.

Being $t$-henselian is an elementary property 
(in ${\mathcal L}_\textrm{ring}$): 
If $K$ is not separably closed, then
$K$ admits only one $t$-henselian topology and this topology is first-order
definable in the language of rings. Fix any irreducible, separable polynomial
$f \in K[X]$ with $\deg(f) >1$ and $a \in K$ satisfying $f'(a) \neq 0$. We
define
$$U_{f,a} \coloneqq \Set{f(x)^{-1} - f(a)^{-1} | x \in K }.$$
Then the sets $c \cdot U_{f,a}$ for $c \in K^\times$
form a basis of neighbourhoods of $0$ of the (unique)
$t$-henselian topology on $K$
(see \cite{Pr91}, p.\,203).
In particular, we get the following
\begin{Rem}[\cite{PZ78}, Remark 7.11] \label{topol}
If $K$ is not separably closed and admits a $t$-henselian topology, 
then
every field elementarily equivalent to $K$ carries a 
$t$-henselian topology.
\end{Rem}

Note that henselian fields are of course 
$t$-henselian with the topology being the valuation
topology induced by some (any) non-trivial henselian valuation. 
In the axiom scheme, we can choose $U$ as the maximal ideal of some
(any) non-trivial henselian valuation 
for any $n \in \mathbb N$.
If we take a $t$-henselian field, every sufficiently saturated elementarily
equivalent field will carry a henselian valuation:
\begin{Th}[\cite{PZ78}, Theorem 7.2]
 \label{sat}
Let $K$ be a non-separably closed field. Then $K$ is $t$-henselian iff 
$K$ is elementarily equivalent to some field admitting a non-trivial
henselian valuation.
\end{Th}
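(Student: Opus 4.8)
The plan is to prove the two directions separately, using the facts established just above: that a non-separably-closed field carries at most one $t$-henselian topology, that this topology is a $\emptyset$-definable $V$-topology in $\mathcal{L}_\textrm{ring}$, and Remark \ref{topol}. For the easy direction, suppose $L\equiv K$ carries a non-trivial henselian valuation $w$. Then $L$ with its $w$-adic topology is $t$-henselian: axioms $(1)$--$(5)$ are the standard features of a non-discrete valuation topology, and axiom $(6)$ holds with $U=\mathfrak{m}_w$, since any $f=X^{n+1}+X^n+g$ with $g\in\mathfrak{m}_w[X]$ of degree $\leq n-1$ reduces mod $\mathfrak{m}_w$ to $X^n(X+1)$, whose root $-1$ is simple (the derivative reduces to $\pm 1$) and hence lifts to a root of $f$ in $L$ by Hensel's Lemma. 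Since $K$ is not separably closed, neither is $L$ -- this is witnessed by a single $\mathcal{L}_\textrm{ring}$-sentence true in $K$, hence in $L$ -- so Remark \ref{topol} applied to $L$ gives that $K$, being elementarily equivalent to $L$, carries a $t$-henselian topology.

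For the converse, let $K$ be $t$-henselian and not separably closed, with $t$-henselian topology $\tau$, and pass to an $\aleph_1$-saturated elementary extension $K^{*}\succ K$. The defining formula of $\tau$ and each individual instance of the axiom scheme $(1)$--$(6)$ are first-order, so $K^{*}$ carries a definable topology $\tau^{*}$ satisfying all the $t$-henselian axioms; in particular $\tau^{*}$ is a $V$-topology. I would first exclude the archimedean case: were $\tau^{*}$ induced by an archimedean absolute value, saturation would realise a nonzero $t\in K^{*}$ with $nt$ in a fixed bounded basic neighbourhood of $0$ for every $n\in\mathbb{N}$ (the type is finitely satisfiable by continuity of scalar multiplication together with non-discreteness, axiom $(1)$), contradicting archimedeanity. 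Hence $\tau^{*}$ is induced by a Krull valuation $v$ on $K^{*}$, non-trivial since $\tau^{*}$ is non-discrete.

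It remains to produce a \emph{henselian} valuation on $K^{*}$. Here I use the characterization underlying the axiom scheme: a valuation $u$ is henselian if and only if, for every $n\geq 1$, every $X^{n+1}+X^n+c_{n-1}X^{n-1}+\dots+c_0$ with all $c_i\in\mathfrak{m}_u$ has a root (the lift of the simple root $-1$ of $X^n(X+1)$). For each $n$ fix a basic $\tau^{*}$-neighbourhood $U_n$ of $0$ witnessing axiom $(6)$; each $U_n$ contains a $v$-ball $\{x:v(x)>\beta_n\}$. Since only countably many thresholds $\beta_n$ occur, $\aleph_1$-saturation yields a \emph{proper} convex subgroup $\Delta$ of $vK^{*}$ containing every $\beta_n$; the corresponding non-trivial coarsening $w:=v/\Delta$ then satisfies $\mathfrak{m}_w\subseteq\bigcap_n U_n$. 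Now for every $n$ and every $g=X^{n+1}+X^n+\sum_{i<n}b_iX^i$ with $b_i\in\mathfrak{m}_w\subseteq U_n$, axiom $(6)$ supplies a root of $g$ in $K^{*}$; by the characterization above $w$ is henselian, and since $K\equiv K^{*}$ this completes the proof.

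The step I expect to be the real obstacle is the last one: closing the gap between what axiom $(6)$ literally asserts -- roots only for polynomials whose lower coefficients lie in some a priori tiny neighbourhood depending on $n$ -- and full henselianity, where they need only lie in $\mathfrak{m}$. Passing to a saturated model and replacing $v$ by the coarsening $w$ whose maximal ideal sits below all the relevant thresholds is what bridges this; making the saturation bookkeeping precise and checking that the basic neighbourhoods $c\cdot U_{f_0,a_0}$ really behave like $v$-balls (so that they can be compared with $\mathfrak{m}_w$) is where care is needed. Excluding the archimedean $V$-topology is a minor second point, handled by the same saturation -- alternatively, one may keep it and observe that a non-separably-closed field with an archimedean $t$-henselian topology is real closed, hence elementarily equivalent to the (henselian) real closure of $\mathbb{R}((t))$.
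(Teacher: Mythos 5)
The paper does not reprove this theorem; it simply cites it as Theorem~7.2 of Prestel--Ziegler \cite{PZ78}, so there is no in-paper argument to compare against. Your reconstruction is, as far as I can tell, the standard Prestel--Ziegler route and is essentially correct: the easy direction via Hensel's lemma and Remark~\ref{topol}; the hard direction via passing to a sufficiently saturated extension, using Kowalsky--D\"urbaum to get a Krull valuation $v$ inducing the (definable) $V$-topology, and then coarsening $v$ past the countably many thresholds coming from axiom~(6) to obtain a nontrivial coarsening $w$ with $\mathfrak m_w\subseteq\bigcap_n U_n$, whence $w$ is henselian by the ``every $X^{n+1}+X^n+(\text{small lower terms})$ has a root'' characterization.

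Two places deserve a slightly more careful look than you give them, though neither is a gap in the idea. First, the final characterization of henselianity: axiom~(6) only asserts the existence of \emph{some} root, not of a root lifting $-1$; to match the Engler--Prestel form (every $1+X+a_2X^2+\dots+a_nX^n$ with $a_i\in\mathfrak m$ has a root) one passes to the reciprocal polynomial, and one needs to handle the degenerate cases where the constant term vanishes (either by noting the reciprocal polynomial drops degree and inducting, or by restricting to $b_0\neq0$, for which the root is automatically nonzero). Second, the saturation step: $\mathfrak m_v$ and $vK^*$ are not a priori interpretable (only the topology is definable, with parameters $f,a$), so the type ``$x\neq0$ and $v(x)>k\beta_n$ for all $k,n$'' must be expressed by replacing $\mathfrak m_v$ with a definable basic neighbourhood $W\subseteq\mathfrak m_v$ and picking witnesses $y_n$ with $v(y_n)=\beta_n$, giving a countable partial type over countably many parameters whose finite satisfiability follows from axiom~(1). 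You flag both of these yourself as the places where care is needed, and indeed they do work out, so I would call the proposal correct.
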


We now want to use the definability of $p$-henselian valuations to define
valuations on $t$-henselian fields. This improves the statement and the proof 
of Theorem 4.1 in
\cite{Koe94} in which a similar definition is found. 
\begin{Thm} \label{Thmt}
Let $K$ be a $t$-henselian field, neither separably closed nor real closed. 
Then $K$ admits a definable
valuation inducing the $t$-henselian topology.
If $K\neq K(p)$ holds for some prime $p$ or if 
$K$ has small absolute Galois group, i.e.~$K$ 
admits only finitely many Galois extensions of degree $n$ for each
natural number $n$,
then there is even a $\emptyset$-definable valuation inducing the 
$t$-henselian topology.
\end{Thm}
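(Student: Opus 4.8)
The plan is to reduce the general statement to the Main Theorem (via Corollary~\ref{Cmain}) by distinguishing cases according to the arithmetic of $K$, and in each case to produce a definable valuation whose topology is the $t$-henselian one. Recall that since $K$ is not separably closed, the $t$-henselian topology is unique and $\emptyset$-definable in $\mathcal{L}_\textrm{ring}$; so a definable valuation $v$ induces it precisely when $\mathcal{O}_v$ is a neighbourhood basis element, equivalently when $\mathfrak{m}_v$ is open and bounded in that topology. The key structural input is that if $(K,\tau)$ is $t$-henselian and $K \neq K(p)$ for some prime $p$, then by Theorem~\ref{sat} (applied to a saturated elementary extension) $K$ is elementarily equivalent to a henselian valued field, and on such a field the canonical $p$-henselian valuation $v_K^p$ is non-trivial and its topology refines (hence, by the $V$-topology comparison, coincides with) $\tau$; this last point is where one uses that $\tau$ is the unique $V$-topology and that $v_K^p$ is $p$-henselian, so its topology is coarser than the henselian one while still being non-discrete.

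First I would dispose of the case $K \neq K(p)$ for some prime $p$. If $p \neq 2$, or $p = 2$ but $Kv_K^2$ is not Euclidean, then by the Main Theorem the formula $\phi_p$ defines $v_K^p$ as such, and since $v_K^p$ is non-trivial and $p$-henselian it induces $\tau$; this is $\emptyset$-definable. If $p = 2$ and $Kv_K^2$ is Euclidean, then by Corollary~\ref{Cmain} the formula $\psi_2$ defines instead the coarsest $2$-henselian valuation $v_K^{2*}$ with Euclidean residue field; I must check this is still non-trivial and still induces $\tau$. Non-triviality: if $v_K^{2*}$ were trivial then $K$ itself would be Euclidean, hence real closed (as $K(2)$ would be a degree-$2$ extension realising $\bar K$), contradicting our hypothesis. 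That it induces $\tau$: $v_K^{2*}$ is a coarsening of $v_K^2$, whose topology is $\tau$, and a non-trivial coarsening of a $V$-topology valuation still defines the same $V$-topology. So in all sub-cases of $K \neq K(p)$ we get a $\emptyset$-definable valuation inducing $\tau$.

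Next the case where $K = K(p)$ for every prime $p$, i.e.\ $G_K$ is a pro-$p'$-free-ish field with no Galois $p$-extensions at all for any $p$ --- but since $K$ is not separably closed this forces the absolute Galois group to be non-trivial yet with $K(p) = K$ for all $p$; the only such fields are the Euclidean (real closed) ones by Artin--Schreier/Becker, which are excluded. Hence this case is vacuous, and in fact $K \neq K(p)$ for \emph{some} $p$ always holds for a non-separably-closed, non-real-closed $t$-henselian field. This observation means the first bullet of the theorem's hypothesis is automatic, so the ``definable'' (with parameters) clause is subsumed; but the theorem still separately asserts $\emptyset$-definability under the extra hypothesis of small absolute Galois group, which I handle as follows: if $G_K$ is small, then for a suitable prime $p$ one has $K \neq K(p)$ (as just argued), and moreover the residue field behaviour needed in Corollary~\ref{Cmain} is controlled, so the relevant $\psi_p$ is available without parameters.

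The main obstacle I expect is the verification that the \emph{definable}-with-parameters case is genuinely needed or, conversely, cleanly avoided: one must argue carefully that for a non-separably-closed, non-real-closed $t$-henselian $K$ there is always a prime $p$ with $K \neq K(p)$, so that the parameter-free machinery of the Main Theorem and Corollary~\ref{Cmain} applies directly. If, contrary to expectation, there were such a $K$ with $K = K(p)$ for all $p$ but $K$ not real closed, one would instead have to extract a definable valuation from the $t$-henselian axioms themselves (e.g.\ via the sets $U_{f,a}$), using a parameter $a$ and a polynomial $f$ to pin down a bounded neighbourhood, and then run a Beth-type argument to remove parameters only under the small-Galois-group hypothesis; this is the delicate part, since definability of the topology does not by itself hand us a definable \emph{valuation}. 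The cleanest route is to establish the ``some $p$ with $K \neq K(p)$'' dichotomy up front via Artin--Schreier/Becker, after which everything reduces to the already-proven Main Theorem and Corollary~\ref{Cmain}.
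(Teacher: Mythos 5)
Your proposal has a fundamental gap: the claim that the case ``$K = K(p)$ for all primes $p$'' is vacuous for non-separably-closed, non-real-closed $t$-henselian fields is false, and your appeal to Artin--Schreier/Becker does not support it. That theorem characterizes fields $F$ for which $[F(p):F]$ is \emph{finite and greater than $1$} (those are the Euclidean ones, with $p=2$); it says nothing about fields with $F(p)=F$ for \emph{every} $p$. The latter condition just means $G_K$ has no open normal subgroup of prime index, equivalently that every finite quotient of $G_K$ is a perfect group. Such absolute Galois groups exist and can be realized over henselian fields: take a PAC field $F$ whose absolute Galois group has only perfect finite quotients (e.g.\ a suitable free pro-$C$ group for a class of perfect groups), and form $K = F((t^{\mathbb{Q}}))$ in characteristic $0$; then $K$ is henselian, $G_K \cong G_F$, and $K(p)=K$ for all $p$, yet $K$ is neither separably closed nor real closed. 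So the second half of the theorem, where definability requires a parameter, is a genuine case, and the distinction between ``definable'' and ``$\emptyset$-definable'' in the statement is meaningful. The paper handles it by passing (with parameters) to a finite Galois extension $L/K$ with $L \neq L(p)$, applying the already-established part there, and restricting to $K$; the parameter is needed to name $L$.

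Consequently your treatment of the small-Galois-group clause also collapses, since it rests on the same false vacuity claim. The paper's argument here is quite different and is the real content of that clause: when $G_K$ is small, for a fixed degree $n$ there are only finitely many Galois extensions $L/K$ of degree $n$ with $L \neq L(p)$ (and $\zeta_p \in L$ when needed), and the finest common coarsening of the restrictions $\mathcal{O}_{v_L^p}\cap K$ is a valuation ring that is $\emptyset$-definable because the finite family of $L$'s can be quantified over without naming any particular one. You would need to supply this (or an equivalent) construction. A smaller but still real omission: in the case $K \neq K(p)$, the Main Theorem and Corollary~\ref{Cmain} require $\mathrm{char}(K)=p$ or $\zeta_p \in K$; when neither holds you must first pass to $K(\zeta_p)$, which the paper shows is interpretable without parameters (its minimal polynomial has coefficients in $\mathrm{dcl}_K(\emptyset)$), and then restrict the resulting definable valuation back to $K$. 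Your proposal applies the Main Theorem directly to $K$ without this step.
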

\begin{proof}
Note that without loss of generality, we may assume that $K$ is henselian.
By the previous remarks, 
any sufficiently saturated elementary extension $K'$ of $K$
is henselian. Furthermore, a base of the $t$-henselian topology can be defined 
using the same formulas on $K'$ as on $K$.
In our proof, 
we only need parameters to encode the minimal polynomial of
a specific Galois extension of $K'$.
As $K$ is relatively algebraically closed in $K'$, we can use parameters
from $K$. 
Thus, it suffices to give a 
definition of a valuation on $K'$ 
(using only parameters to encode some finite
Galois extension) inducing the
$t$-henselian topology. 
The same formula then also defines such a valuation on $K$.

First we assume that there is some $p$ with $K \neq K(p)$ (and $p\neq2$
if $K$ is Euclidean). Let $v$ be a non-trivial henselian valuation on $K$. 
In case
$\mathrm{char}(K) \neq p$ and 
$K$ does not contain a primitive $p$th root of unity,
consider $K(\zeta_p)$. Let $w$ be the
(by henselianity unique) extension of $v$ to $K(\zeta_p)$. 
Since $K(\zeta_p)$ is a finite Galois
extension of $K$ and the coefficients of the minimal polynomial
of this extension are all in $\mathrm{dcl}_K(\emptyset)$,
$K(\zeta_p)$ is interpretable without parameters in $K$.
Hence, it suffices to define a valuation on $K(\zeta_p)$ without parameters 
which induces
the same topology on $K(\zeta_p)$ as
 $w$. The
restriction of such a valuation to $K$ is then again $\emptyset$-definable and
induces the henselian topology on $K$. 
By Corollary \ref{Cmain}, some non-trivial coarsening of $v_{K(\zeta_p)}^p$
is $\emptyset$-definable on $K(\zeta_p)$.
As $v_{K(\zeta_p)}$ is in particular $p$-henselian, these valuations are
comparable and thus induce
the same topology. 

Otherwise, we have that $K = K(p)$ holds for all primes $p$ with $p \mid \#G_K$
(except possibly for $p=2$ if $K$ is Euclidean).
We may assume that $K$ is not Euclidean, since -- as above -- it suffices
to define a suitable valuation without parameters on $K(i)$.

Furthermore, there must be at least one prime 
$p$ with $p \mid \#G_K$,
else $K$ would be separably or real 
closed. Using parameters from $K$, we can now define some finite 
Galois extension 
$L$ of $K$ such that $L\neq L(p)$ holds. By the first part of the proof,
there is an $\emptyset$-definable valuation on $L$ inducing the
$t$-henselian toplogy, thus its restriction to $K$ is a definable valuation 
inducing the $t$-henselian topology on $K$. 

For the last part, assume that $G_K$ is small.
Let $n$ be an integer such that there exists a Galois extension $L$ of $K$,
$[L:K]=n$, with $L \neq L(p)$ and $\zeta_p \in L$ in case 
$\mathrm{char}(K)\neq p$.
Consider the valuation ring
\begin{align*} \mathcal{O}:= 
\prod \Big( \mathcal{O}_{v_L^{p}} \cap K \;\Big|\;
&K \subseteq L \textrm{ Galois}, \,[L:K]=n,\,L \neq L(p), \zeta_p \in L
\textrm{ in case }\mathrm{char}(K)\neq p \Big)
\end{align*}
on $K$. Note that since $G_K$ is small, this product is finite and
thus $\mathcal{O}$ is the valuation ring of 
the finest common coarsening of all the restrictions of $v_L^p$
to $K$. In particular, it induces the same topology as these restrictions, 
namely the $t$-henselian topology on $K$.
By Theorem \ref{main}, the ring $\mathcal{O}$ is $\emptyset$-definable. Thus,
this gives a non-trivial $\emptyset$-definable valuation on $K$ inducing the
$t$-henselian topology. 
\end{proof}

\section*{Acknowledgements} The authors would like to thank 
Katharina Dupont for finding a number of
mistakes in an earlier version.

\bibliographystyle{alpha}
\bibliography{franzi}

\begin{thebibliography}{Rum80}

\bibitem[Br{\"o}76]{Br76}
Ludwig Br{\"o}cker.
\newblock Characterization of fans and hereditarily {P}ythagorean fields.
\newblock {\em Mathematische Zeitschrift}, 151(2):149--163, 1976.

\bibitem[EP05]{EP05}
Antonio~J. Engler and Alexander Prestel.
\newblock {\em Valued fields}.
\newblock Springer Monographs in Mathematics. Springer-Verlag, Berlin, 2005.

\bibitem[Hod97]{Hod97}
Wilfrid Hodges.
\newblock {\em A shorter {M}odel {T}heory}.
\newblock Cambridge University Press, Cambridge, 1997.

\bibitem[Koe94]{Koe94}
Jochen Koenigsmann.
\newblock Definable valuations.
\newblock Preprint, 1994.

\bibitem[Koe95]{Koe95}
Jochen Koenigsmann.
\newblock {$p$}-{H}enselian fields.
\newblock {\em Manuscripta Math.}, 87(1):89--99, 1995.

\bibitem[Koe03]{Koe03}
Jochen Koenigsmann.
\newblock Encoding valuations in absolute {G}alois groups.
\newblock In {\em Valuation theory and its applications, {V}ol. {II}
  ({S}askatoon, {SK}, 1999)}, volume~33 of {\em Fields Inst. Commun.}, pages
  107--132. Amer. Math. Soc., Providence, RI, 2003.

\bibitem[PD11]{PD}
Alexander Prestel and Charles~N. Delzell.
\newblock {\em Mathematical logic and model theory}.
\newblock Universitext. Springer, London, 2011.
\newblock A brief introduction, Expanded translation of the 1986 German
  original.

\bibitem[Pre91]{Pr91}
Alexander Prestel.
\newblock Algebraic number fields elementarily determined by their absolute
  {G}alois group.
\newblock {\em Israel J. Math.}, 73(2):199--205, 1991.

\bibitem[PZ78]{PZ78}
Alexander Prestel and Martin Ziegler.
\newblock Model-theoretic methods in the theory of topological fields.
\newblock {\em J. Reine Angew. Math.}, 299(300):318--341, 1978.

\bibitem[Rob49]{Ro49}
Julia Robinson.
\newblock Definability and decision problems in arithmetic.
\newblock {\em J. Symbolic Logic}, 14:98--114, 1949.

\bibitem[Rob59]{Ro59}
Julia Robinson.
\newblock The undecidability of algebraic rings and fields.
\newblock {\em Proc. Amer. Math. Soc.}, 10:950--957, 1959.

\bibitem[Rum80]{Ru80}
R.~S. Rumely.
\newblock Undecidability and definability for the theory of global fields.
\newblock {\em Trans. Amer. Math. Soc.}, 262(1):195--217, 1980.

\end{thebibliography}
\end{document}